\newtheorem{theorem}{Theorem}
\newtheorem{lemma}[theorem]{Lemma}
\newtheorem{proposition}[theorem]{Proposition}
\newtheorem{corollary}[theorem]{Corollary}
\newtheorem{problem}{Problem}
\newtheorem{conjecture}{Conjecture}
\definecolor{limeGreen}{RGB}{50,205,50}
\def\inst#1{$^{#1}$}
\date{}
\title{On Helly numbers of exponential lattices\thanks{G. Ambrus was partially supported by ERC Advanced Grant "GeoScape" no.  882971,  by the Hungarian National Research grant no. NKFIH KKP-133819,  and by project no. TKP2021-NVA-09, which has been implemented with the support provided by the Ministry of Innovation and Technology of Hungary from the National
Research, Development and Innovation Fund, financed under the
TKP2021-NVA funding scheme.
M. Balko was supported by the grant no. 21/32817S of the Czech Science Foundation (GA\v{C}R) and by the Center for Foundations of Modern Computer Science (Charles University project UNCE/SCI/004).
N. Frankl was partially supported by ERC Advanced Grant "GeoScape". 
A. Jung was supported by the Rényi Doctoral Fellowship of the Rényi Institute.
M. Nasz{\'o}di was supported by the J{\'a}nos Bolyai Scholarship of the Hungarian Academy of Sciences.
This article is part of a project that has received funding from the European Research Council (ERC) under the European Union's Horizon 2020 research and innovation programme (grant agreement No 810115).}
\author{Gergely Ambrus\inst{1}
\and
Martin Balko\inst{2}
\and
N\'{o}ra Frankl\inst{3}
\and
Attila Jung\inst{4}
\and
M\'{a}rton Nasz\'{o}di\inst{5}
}
}
\begin{document}

\maketitle

\begin{center}
{\footnotesize
\inst{1} 
Dpeartment of Geometry, Bolyai Institute, University of Szeged, Hungary, and\\
Alfr\'{e}d R\'{e}nyi Institute of Mathematics, Hungary
 \\
\texttt{ambrus@renyi.hu}\\
\inst{2} 
Department of Applied Mathematics, \\
Faculty of Mathematics and Physics, Charles University, Czech Republic \\
\texttt{balko@kam.mff.cuni.cz}\\
\inst{3} 
School of Mathematics and Statistics, The Open University, UK, and Alfr\'{e}d R\'{e}nyi Institute of Mathematics, Hungary \\
\texttt{nora.frankl@open.ac.uk}\\
\inst{4} 
Institute of Mathematics, ELTE E\"{o}tv\"{o}s Lor\'{a}nd University, Hungary \\
\texttt{jungattila@gmail.com}\\
\inst{5} 
Alfr\'{e}d R\'{e}nyi Institute of Mathematics and
Department of Geometry, E\"{o}tv\"{o}s Lor\'{a}nd University, Hungary.\\
\texttt{marton.naszodi@math.elte.hu}\\
}
\end{center}

\begin{abstract}
Given a  set $S \subseteq \mathbb{R}^2$, define the \emph{Helly number of $S$}, denoted by $H(S)$, as the smallest positive integer $N$, if it exists, for which the following statement is true: for any finite family $\mathcal{F}$ of convex sets in~$\mathbb{R}^2$ such that the intersection of any $N$ or fewer members of~$\mathcal{F}$ contains at least one point of $S$, there is a point of $S$ common to all members of $\mathcal{F}$.

We prove that the Helly numbers of \emph{exponential lattices} $\{\alpha^n \colon n \in \mathbb{N}_0\}^2$ are finite for every $\alpha>1$ and we determine their exact values in some instances.
In particular, we obtain $H(\{2^n \colon n \in \mathbb{N}_0\}^2)=5$, solving a problem posed by Dillon (2021).

For real numbers $\alpha, \beta > 1$, we also fully characterize exponential lattices $L(\alpha,\beta) = \{\alpha^n \colon n \in \mathbb{N}_0\} \times \{\beta^n \colon n \in \mathbb{N}_0\}$ with finite Helly numbers by showing that $H(L(\alpha,\beta))$ is finite if and only if $\log_\alpha(\beta)$ is rational.
\end{abstract}

\section{Introduction}

\emph{Helly's theorem}~\cite{helly23} is one of the most classical results in combinatorial geometry.
It states that, for each $d \in \mathbb{N}$, if the intersection of any $d + 1$ or fewer members of a finite family~$\mathcal{F}$ of convex sets in $\mathbb{R}^d$ is nonempty, then the entire family $\mathcal{F}$ has nonempty intersection.
There have been numerous variants and generalizations of this famous result; see~\cite{amDeSo17,holWeg17} for example.
One active direction of this research with rich connections to the theory of optimization, in particular to integer programming and LP-type problems~\cite{amDeSo17,confSum16}, is the study of variants of Helly's theorem with coordinate restrictions, which is captured by the following definition.

Let $d$ be a positive integer.
The \emph{Helly number} of a set $S \subseteq \mathbb{R}^d$, denoted by~$H(S)$, is the smallest positive integer $N$, if it exists, such that the following statement is true for every finite family $\mathcal{F}$ of convex sets in $\mathbb{R}^d$: if the intersection of any $N$ or fewer members of $\mathcal{F}$ contains at least one point of $S$, then $\bigcap \mathcal{F}$ contains at least one point of $S$. 
If no such number $N$ exists, then we write $H(S) = \infty$.
Helly's theorem in this language can be restated as $H(\mathbb{R}^d)=d+1$.

A classical result of this sort is \emph{Doignon's theorem}~\cite{doignon73} where the set $S$ is the integer lattice~$\mathbb{Z}^d$.
This result, which was also independently discovered by Bell~\cite{bell76} and by Scarf~\cite{scarf77}, states that $H(\mathbb{Z}^d) \leq 2^d$.
This is tight as for $Q = \{0,1\}^d$ the intersection of any $2^d - 1$ sets in the family $\{{\rm conv}(Q \setminus \{x\}) \colon x \in Q\}$ contains a lattice point, but the intersection of all $2^d$ sets  does not.

The theory of Helly numbers of general sets is developing quickly and there are many result of this kind~\cite{amDeSo17,holWeg17}.
For example, De Loera, La Haye, Oliveros, and Rold\'{a}n-Pensado~\cite{deHayeOliRol17} and De Loera, La Haye, Rolnick, and Sober\'{o}n~\cite{deHaRoSo17} studied the Helly numbers of differences of lattices and Garber~\cite{gar17} considered Hely numbers of crystals or cut-and-project sets.

The Helly number of a set $S$ is closely related to the maximum size of a set that is empty in $S$.
A subset $X \subseteq S$ is \emph{intersect-empty} if $\left(\bigcap_{x \in X}{\rm conv}(X \setminus \{x\}) \right) \cap S = \emptyset$.
A convex polytope $P$ with vertices in $S$ is \emph{empty in $S$} if $P$ does not contain any points of $S$ other than its vertices.
In particular, an empty polytope does not contain points of $S$ in the interior of its edges.
For a discrete set $S$, we use $h(S)$ to denote the maximum number of vertices of an empty polytope in $S$.
If there are empty polytopes in $S$ with arbitrarily large number of vertices, then we write $h(S)=\infty$.

The following result by Hoffman~\cite{hoffman79} (which was essentially already proved by Doignon~\cite{doignon73}) shows the close connection between intersect-empty sets and empty polygons in $S$ and the Helly numbers of $S$; see also~\cite{agmpsw17}.

\begin{proposition}[\cite{hoffman79}]
\label{prop-hoffman}
If $S \subseteq \mathbb{R}^d$, then $H(S)$ is equal to the maximum cardinality of an intersect-empty set in $S$. 
If $S$ is discrete, then $H(S)=h(S)$.
\end{proposition}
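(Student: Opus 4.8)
We prove Proposition~\ref{prop-hoffman} in two stages. Throughout, write $m(S)$ for the maximum cardinality of an intersect-empty subset of $S$, with $m(S)=\infty$ if arbitrarily large ones exist. Stage one is to show $H(S)=m(S)$ for every $S\subseteq\mathbb R^d$; stage two is to show $m(S)=h(S)$ when $S$ is discrete. Together these give both assertions of the proposition.

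For stage one I would verify the two inequalities separately. For $H(S)\ge m(S)$: given an intersect-empty $X\subseteq S$, the finite family $\{\mathrm{conv}(X\setminus\{x\})\colon x\in X\}$ has intersection disjoint from $S$ by the definition of intersect-empty, yet omitting any single member $\mathrm{conv}(X\setminus\{x_0\})$ leaves a subfamily whose members all contain the point $x_0\in S$; hence the $S$-Helly property fails at level $|X|-1$ and, a fortiori, at every smaller level, so $H(S)\ge|X|$, and therefore $H(S)\ge m(S)$. For $H(S)\le m(S)$, assume $m(S)<\infty$ (otherwise the previous bound already gives $H(S)=\infty$) and suppose for contradiction that some finite family witnesses a failure of the $S$-Helly conclusion at level $m(S)$; among all such, pick $\mathcal F=\{C_1,\dots,C_n\}$ with $n$ minimum. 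Applying the hypothesis to $\mathcal F$ itself forces $n>m(S)$, and minimality of $n$ gives, for each $i$, a point $p_i\in S$ that lies in $C_j$ for every $j\ne i$ but not in $C_i$. The points $p_1,\dots,p_n$ are pairwise distinct, since a repeated value would lie in all members of $\mathcal F$ and hence in $\bigcap\mathcal F\cap S$. Putting $X=\{p_1,\dots,p_n\}$, convexity of each $C_i$ gives $\mathrm{conv}(X\setminus\{p_i\})\subseteq C_i$, so $\bigcap_i\mathrm{conv}(X\setminus\{p_i\})\subseteq\bigcap\mathcal F$ is disjoint from $S$; thus $X$ is intersect-empty with $|X|=n>m(S)$, a contradiction.

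For stage two, let $S$ be discrete. I would first record two easy facts. (a) If $P$ is an empty polytope with vertex set $X\subseteq S$, then each $x\in X$ is extreme in $P=\mathrm{conv}(X)$, so $\bigcap_{x\in X}\mathrm{conv}(X\setminus\{x\})\subseteq P$ contains no vertex of $P$, and by emptiness no other point of $S$; hence $X$ is intersect-empty and $h(S)\le m(S)=H(S)$. (b) Every intersect-empty $X\subseteq S$ is in convex position: if $x_0\in\mathrm{conv}(X\setminus\{x_0\})$, then $x_0\in\mathrm{conv}(X\setminus\{x\})$ for all $x\in X$, so the point $x_0\in S$ would lie in $\bigcap_{x\in X}\mathrm{conv}(X\setminus\{x\})$, contradicting intersect-emptiness. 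It remains to prove $m(S)\le h(S)$. Choose an intersect-empty $X\subseteq S$ with $|X|=m(S)$ that minimizes $|\mathrm{conv}(X)\cap S|$, a finite quantity since $\mathrm{conv}(X)$ is bounded and $S$ discrete. By (b), $\mathrm{conv}(X)$ is a polytope whose vertex set is exactly $X$, and it suffices to prove that $\mathrm{conv}(X)$ is empty, for then $X$ is the vertex set of an empty polytope with $m(S)$ vertices, giving $h(S)\ge m(S)$ and hence, with (a), $h(S)=m(S)=H(S)$. Suppose $\mathrm{conv}(X)$ is not empty and fix $q\in(\mathrm{conv}(X)\cap S)\setminus X$. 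Since $X$ is intersect-empty and $q\in S$, we have $q\notin\bigcap_{x\in X}\mathrm{conv}(X\setminus\{x\})$, so there is $x_1\in X$ with $q\notin\mathrm{conv}(X\setminus\{x_1\})$. Strictly separating $q$ from $\mathrm{conv}(X\setminus\{x_1\})$ produces a linear functional $\ell$ with $\ell(x)<\ell(q)$ for all $x\in X\setminus\{x_1\}$, and writing $q$ as a convex combination of $X$ and using $q\notin X$ forces $\ell(x_1)>\ell(q)$. Then, for $Y:=(X\setminus\{x_1\})\cup\{q\}$, we have $\mathrm{conv}(Y)\subseteq\mathrm{conv}(X)$; moreover every point of $\mathrm{conv}(Y)$ has $\ell$-value at most $\ell(q)<\ell(x_1)$, so $x_1\notin\mathrm{conv}(Y)$, and hence $|\mathrm{conv}(Y)\cap S|<|\mathrm{conv}(X)\cap S|$ while $|Y|=|X|=m(S)$. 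This contradicts the choice of $X$, provided $Y$ is again intersect-empty.

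The hard part will be precisely that last clause: that pruning a vertex of a maximal intersect-empty set in favour of an enclosed point of $S$ again produces an intersect-empty set. One cannot just inherit $S$-disjointness from $X$, since $\bigcap_{y\in Y}\mathrm{conv}(Y\setminus\{y\})$ need not be contained in $\bigcap_{x\in X}\mathrm{conv}(X\setminus\{x\})$ --- a direct computation with an elongated rectangle in $\mathbb Z^2$ shows these two sets can even be disjoint. Instead I would take $p\in\bigcap_{y\in Y}\mathrm{conv}(Y\setminus\{y\})$, observe that $p$ is not a vertex of $\mathrm{conv}(X)$ (the vertices of $\mathrm{conv}(X)$ surviving in $Y$ remain extreme in $\mathrm{conv}(Y)$, and $x_1\notin\mathrm{conv}(Y)$), and check that $p\in\mathrm{conv}(X\setminus\{x\})$ for every $x\ne x_1$ with $q\in\mathrm{conv}(X\setminus\{x\})$; the remaining vertices --- those \emph{essential} for representing $q$, which in the plane are the two endpoints of the edge of $\mathrm{conv}(X)$ containing $q$, or a single vertex whose removal drops $q$ --- I would dispose of by choosing $x_1$ among them and pushing $\ell$ through the relevant convex combinations, so as to force $p\in\bigcap_{x\in X}\mathrm{conv}(X\setminus\{x\})$ and hence $p\notin S$. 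Turning this case analysis into a clean argument is where the genuine work lies; the minimal-counterexample bookkeeping of stage one and facts (a), (b) are routine by comparison.
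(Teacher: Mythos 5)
The paper does not actually prove Proposition~\ref{prop-hoffman}; it is quoted from Hoffman~\cite{hoffman79} without proof, so there is no in-paper argument to compare against. Judged on its own terms, your stage one (the minimal-counterexample argument giving $H(S)=m(S)$) and your facts (a) and (b) are correct and standard. The gap is in the remaining inequality $m(S)\le h(S)$: your descent is explicitly conditional on the claim that the exchanged set $Y=(X\setminus\{x_1\})\cup\{q\}$ is again intersect-empty, and you do not prove that claim. This is not a deferrable verification. As you yourself note, $\bigcap_{y\in Y}\mathrm{conv}(Y\setminus\{y\})$ need not be contained in $\bigcap_{x\in X}\mathrm{conv}(X\setminus\{x\})$ (already for the parallelogram $\{(0,0),(2,0),(0,1),(2,1)\}$ in $\mathbb{Z}^2$ with $q=(1,0)$ the two intersections are distinct single points), so intersect-emptiness of $Y$ cannot be inherited and must be argued afresh. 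Concretely, for a vertex $y\ne x_1$ with $q\notin\mathrm{conv}(X\setminus\{y\})$, a point $p\in\mathrm{conv}(Y\setminus\{y\})$ only yields a representation of $p$ in which $q$, and hence both $x_1$ and $y$, carry positive weight; combining it with the representation of $p$ avoiding $x_1$ (from $p\in\mathrm{conv}(Y\setminus\{q\})=\mathrm{conv}(X\setminus\{x_1\})$) by an affine combination that cancels the $y$-coefficient forces some other coefficient negative, so one does not get $p\in\mathrm{conv}(X\setminus\{y\})$ for free. The obstruction sits exactly where you place it, and the remedy you sketch (``choose $x_1$ among the essential vertices and push $\ell$ through the relevant convex combinations'') is both unexecuted and tied to the planar picture of the two endpoints of an edge containing $q$, whereas the proposition is stated for general $\mathbb{R}^d$, where the set of essential vertices for $q$ can be larger and less structured.

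Until that lemma is proved --- for a suitably chosen $x_1$, or after replacing the single-swap descent by a different one --- your write-up establishes $H(S)=m(S)\ge h(S)$ but not $H(S)\le h(S)$, so the second assertion of the proposition remains open. Two smaller repairs you should also make: the minimization of $\left|\mathrm{conv}(X)\cap S\right|$ must be taken over a family in which the minimum is attained, e.g.\ over intersect-empty sets of the given cardinality contained in $\mathrm{conv}(X_0)$ for a fixed initial $X_0$ (this also quietly uses that ``discrete'' means closed and discrete, so that $S\cap\mathrm{conv}(X_0)$ is finite); and the case $m(S)=\infty$ should be handled by running the argument for every finite size $n$ rather than for ``an intersect-empty set of maximum cardinality''.
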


Since all the sets $S$ studied in this paper are discrete, we state all of our results using $h(\alpha)$ but, due to Proposition~\ref{prop-hoffman}, our results apply to $H(\alpha)$ as well.

Very recently, Dillon~\cite{dillon21} proved that the Helly number of a set $S$ is infinite if $S$ belongs to a certain collection of \emph{product sets}, which are sets of the form $S=A^d$ with a certain kind of discrete set $A \subseteq \mathbb{R}$.
His result shows, for example, that whenever $p$ is a polynomial of degree at least 2 and $d \geq 2$, then $h(\{p(n) \colon n \in \mathbb{N}_0\}^d) = \infty$.
However, there are sets for which Dillon's method gives no information, for example $\{2^n \colon n \in \mathbb{N}_0\}^2$.
Thus, Dillon~\cite{dillon21} posed the following question, which motivated our research.

\begin{problem}[Dillon, \cite{dillon21}]
\label{prob-Dillon}
What is $h(\{2^n \colon n \in \mathbb{N}_0\}^2)$?
\end{problem}

In this paper, we study the Helly numbers of \emph{exponential lattices $L(\alpha)$} and $L(\alpha, \beta)$ in the plane where $L(\alpha)=\{\alpha^n \colon n \in \mathbb{N}_0\}^2$ and $L(\alpha, \beta) = \{\alpha^n \colon n \in \mathbb{N}_0\} \times \{\beta^n \colon n \in \mathbb{N}_0\}$ for real numbers $\alpha, \beta >1$.
In particular, we prove that Helly numbers of exponential lattices $L(\alpha)$ are finite and we provide several estimates that give exact values for $\alpha$ sufficiently large, solving Problem~\ref{prob-Dillon}. We also show that Helly numbers of exponential lattices $L(\alpha, \beta)$ are finite if and only if $\log_\alpha (\beta)$ is rational.
Finally, we introduce some new open problems, for example, it is not even known whether the Helly numbers of the sets $\{\alpha^n \colon n\in\mathbb{N}_0\}^d$ with $d>2$ are finite.

\section{Our results}

For a real number $\alpha >1$ and the exponential lattice $L(\alpha)=\{\alpha^n \colon n \in \mathbb{N}_0\}^2$, we abbreviate $h(L(\alpha))$ by $h(\alpha)$.

As our first result, we provide finite bounds on the numbers $h(\alpha)$ for any $\alpha>1$.
The upper bounds are getting smaller as $\alpha$ increases and reach their minimum at $\alpha=2$.

\begin{theorem}
\label{thm-upperBound}
For every real $\alpha > 1$, the maximum number of vertices of an empty polygon in $L(\alpha)$ is finite.
More precisely, we have $h(\alpha) \leq 5$ for every $\alpha \geq 2$, $h(\alpha) \leq 7$ for every $\alpha \in [\frac{1+\sqrt{5}}{2},2)$, and 
\[h(\alpha) \leq 3\left\lceil\log_\alpha\left(\frac{\alpha}{\alpha-1}\right)\right\rceil+3\]
for every $\alpha \in (1,\frac{1+\sqrt{5}}{2})$.
\end{theorem}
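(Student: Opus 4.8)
The plan is to invoke Proposition~\ref{prop-hoffman} so that it suffices to bound the number of vertices of a convex polygon $P$ whose vertices lie in $L(\alpha)$ and which contains no other point of $L(\alpha)$; call such a $P$ \emph{empty}. The finiteness statement is then subsumed by the third displayed bound, so I focus on the three quantitative estimates. It is convenient to pass to \emph{exponent coordinates}: a vertex $v$ of $P$ is written $v=(\alpha^{a},\alpha^{b})$ with $(a,b)\in\mathbb{N}_0^{2}$, and emptiness says exactly that no $(a,b)\in\mathbb{N}_0^{2}$ other than the exponent vectors of the vertices of $P$ satisfies $(\alpha^{a},\alpha^{b})\in P$.

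The first step is to decompose $\partial P$ into three monotone arcs: the lower boundary of $P$ (the graph of a convex function of $x$, running from the leftmost to the rightmost vertex) and the two halves of the upper boundary obtained by cutting at the topmost vertex (each of these the graph of a concave function, and monotone in both coordinates). Since these three arcs overlap only in the three extreme vertices used to cut, it is enough to prove that \emph{any one such monotone arc carries at most $\lceil\log_{\alpha}(\tfrac{\alpha}{\alpha-1})\rceil+2$ vertices}: adding the three bounds and correcting for the three shared vertices yields $h(\alpha)\le 3\lceil\log_{\alpha}(\tfrac{\alpha}{\alpha-1})\rceil+3$.

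The heart of the proof is this per-arc estimate, and it is also where I expect the main difficulty to lie. Consider an arc $\Gamma$ on the lower boundary with consecutive vertices $u_{0},\dots,u_{t}$, so that $P$ lies above $\Gamma$; crucially, $\mathrm{conv}\{u_{0},\dots,u_{t}\}\subseteq P$, so the whole region between $\Gamma$ and the chord $u_{0}u_{t}$ is empty of points of $L(\alpha)$. Write $u_{i}=(\alpha^{a_{i}},\alpha^{c_{i}})$. For a negative-slope edge $u_{i}u_{i+1}$ I would evaluate it at the column $x=\alpha^{a_{i}+1}$ one step to the right of $u_{i}$: a short computation, using $\alpha^{k}-\alpha^{k-1}=\alpha^{k-1}(\alpha-1)$, shows that unless this edge-height is at least $\alpha^{c_{i}-1}$, some power $\alpha^{c}$ falls strictly between the edge and the roof (the chord, or more locally the part of $\partial P$ above $\Gamma$), producing a forbidden point of $L(\alpha)$ in $P$; a symmetric computation handles positive-slope edges near their upper endpoint, and the concave arcs on the upper boundary are treated by the mirror-image argument. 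This pins the exponent gaps $a_{i+1}-a_{i}$ (and $|c_{i+1}-c_{i}|$) to $O\!\left(\log_{\alpha}\tfrac{\alpha}{\alpha-1}\right)$, and combining this with the fact that the edge slopes — which equal $\alpha^{c_{i}-a_{i}}$ times a bounded factor — must strictly change along the convex chain $\Gamma$ caps the number of edges at $\lceil\log_{\alpha}(\tfrac{\alpha}{\alpha-1})\rceil+1$. The two delicate points are: (i) making the witness power provably lie inside $P$ and not merely in the local wedge at $u_{i}$, which is exactly why one invokes the chord of the whole arc (and cuts the upper boundary at the top vertex, so that the roof above $\Gamma$ is high enough); and (ii) dealing with the arcs near an extreme vertex of $P$, where $P$ can be very thin and a more careful choice of witness column is needed.

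Finally, for $\alpha\ge 2$ one has $\lceil\log_{\alpha}(\tfrac{\alpha}{\alpha-1})\rceil=1$, so the general bound already gives $h(\alpha)\le 6$, and for $\alpha\ge\tfrac{1+\sqrt5}{2}$ it gives $h(\alpha)\le 9$. To sharpen these to $5$ and $7$ I would argue that an empty hexagon (resp.\ octagon) would force all three arcs of the decomposition to be simultaneously saturated, which rigidly determines the exponent gaps of all edges; a short finite case analysis of the resulting configurations then exhibits in each case a point of $L(\alpha)$ inside the polygon, a contradiction. Applied at $\alpha=2$, this yields $h(\{2^{n}:n\in\mathbb{N}_0\}^{2})\le 5$.
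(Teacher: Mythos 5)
Your high-level strategy (reduce to bounding the vertices of an empty convex polygon, split the boundary into monotone chains, bound each chain by roughly $\lceil\log_\alpha(\tfrac{\alpha}{\alpha-1})\rceil$) is in the same spirit as the paper, which splits the edges into four convex chains by slope sign and by which side of the edge $P$ lies on. But the core per-arc estimate, as you describe it, has a genuine gap. Your witness-point computation (evaluating an edge at the column one step to the right of its left endpoint) can only control the exponent gap $a_{i+1}-a_i$ of a \emph{single} edge; bounding each edge's gap does not bound the \emph{number} of edges on the arc, since a convex chain can consist of arbitrarily many edges each advancing by one column, with slopes strictly changing at every step. The paper's lemmas do something different and essential: for the chains where $P$ lies on the ``convex side'' they bound the total horizontal extent of the whole chain (by showing a point such as $(\alpha^{k+m+r},0)$, $r=\lceil\log_\alpha(\tfrac{\alpha}{\alpha-1})\rceil$, lies above the line of the leftmost such edge, so the chain must terminate within $r$ columns), and for the chains where $P$ lies on the ``concave side'' they prove a wedge lemma showing that all candidate vertices lie on at most $r$ lines through the origin, each of which can carry at most one vertex. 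Neither mechanism appears in your sketch, and without one of them the claim that each arc has at most $\lceil\log_\alpha(\tfrac{\alpha}{\alpha-1})\rceil+2$ vertices is unsupported. (Note also that the paper does not prove this uniform per-arc bound: its bound for the negative-slope concave-side chain is $2\lceil\log_\alpha(\tfrac{\alpha+1}{\alpha})\rceil+1$, and the final $3\lceil\log_\alpha(\tfrac{\alpha}{\alpha-1})\rceil+3$ is obtained only via an additional global argument showing certain chain types cannot coexist. Your lower boundary is also not a monotone arc, so it is not covered by your stated claim.)

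The second gap is the sharpening to $h(\alpha)\le 5$ for $\alpha\ge 2$ and $h(\alpha)\le 7$ for $\alpha\in[\tfrac{1+\sqrt5}{2},2)$. You correctly note that the general bound only gives $6$ and $9$, but the assertion that a saturated hexagon/octagon ``rigidly determines the exponent gaps'' and falls to a short finite case analysis is not justified and is, in fact, where most of the work in the paper lies: it introduces the notion of a ``slice'' of positive or negative slope, proves that a polygon contained in such a slice has no edge of a complementary type, and then runs a case analysis on the two edges at the leftmost vertex of $P$ to show that the four chain types never all appear with their maximal lengths simultaneously. You would need to supply an argument of comparable substance to close this part.
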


We note that if $\alpha = 1+\frac{1}{x}$ for $x \in (0,\infty)$, then the bound from Theorem~2 becomes $h(1+\frac{1}{x}) \leq O(x\log_2(x))$.
Moreover, we show that the breaking points of $\alpha$ for our upper bounds are determined by certain polynomial equations; see Section~\ref{sec-upperBound}.

We also consider the lower bounds on $h(\alpha)$ and provide the following estimate.

\begin{theorem}
\label{thm-lowerBound}
We have $h(\alpha) \geq 5$ for every $\alpha \geq 2$ and $h(\alpha) \geq 7$ for every $\alpha \in \left[\frac{1+\sqrt{5}}{2},2\right)$.
For every $\alpha \in \left(1,\frac{1+\sqrt{5}}{2}\right)$, we have
\[h(\alpha) \geq \left\lfloor\sqrt{\frac{1}{\alpha - 1}}\right\rfloor.\]
\end{theorem}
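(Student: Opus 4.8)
The plan is to prove each of the three lower bounds by writing down an explicit empty polygon in $L(\alpha)$ and verifying emptiness by a finite case analysis, using that every possible competitor is a power of $\alpha$ lying in the bounding box of the polygon. For $\alpha\ge 2$ I would take the five points $v_1=(1,\alpha^4)$, $v_2=(\alpha,\alpha^4)$, $v_3=(\alpha^3,\alpha^3)$, $v_4=(\alpha^4,\alpha)$, $v_5=(\alpha^4,1)$ of $L(\alpha)$ (note the $x\leftrightarrow y$ symmetry). Convex position reduces to a handful of polynomial inequalities such as $\alpha^4-2\alpha^3+1>0$, valid for $\alpha\ge 2$. For emptiness, the bounding box is $[1,\alpha^4]^2$, so only the $25$ points $(\alpha^a,\alpha^b)$ with $a,b\in\{0,1,2,3,4\}$ matter, and one checks that each of the $20$ non-vertices violates one of the five edge inequalities: the dangerous ones are $(\alpha^2,\alpha^4)$ and $(\alpha^4,\alpha^2)$, which fall outside the long upper edge $v_2v_4$ lying on $x+y=\alpha^4+\alpha$, and $(\alpha^2,\alpha^2)$, $(\alpha^2,\alpha^3)$, $(\alpha^3,\alpha^2)$, which fall on the origin side of one of the two edges incident to $v_3$; each is a polynomial inequality true for $\alpha\ge 2$. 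With $h(\alpha)\le 5$ from Theorem~\ref{thm-upperBound} this pins $h(\alpha)=5$.

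For $\alpha\in[\tfrac{1+\sqrt5}{2},2)$ I would run the same scheme with an explicit heptagon of the same flavour: two ``top'' vertices $(1,\alpha^N)$, $(\alpha,\alpha^N)$, the symmetric pair on the right, and extra vertices stepping inward toward a near-diagonal vertex. Emptiness again reduces to finitely many polynomial inequalities, and I expect the breakpoints $\tfrac{1+\sqrt5}{2}$ and $2$ to appear as roots of factors like $\alpha^2-\alpha-1$ in them --- indeed, for the naive ``consecutive points on a hyperbola'' construction even the quadrilateral bound fails exactly at $\alpha=\tfrac{1+\sqrt5}{2}$, where $2\alpha^2=1+\alpha^3$. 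Finding the right heptagon is the only non-routine part of this case.

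For $\alpha\in(1,\tfrac{1+\sqrt5}{2})$, set $k=\lfloor\sqrt{1/(\alpha-1)}\rfloor$, so $\alpha-1\le 1/k^2$. I would take the $k$ points $v_i=(\alpha^i,\alpha^{k-1-i})$ for $i=0,\ldots,k-1$, which are exactly the points of $L(\alpha)$ on the hyperbola $xy=\alpha^{k-1}$; lying on a strictly convex arc, they are in convex position, and $Q:={\rm conv}\{v_0,\ldots,v_{k-1}\}$ is contained both in $\{xy\ge\alpha^{k-1}\}$ (a convex region) and in $\{x+y\le 1+\alpha^{k-1}\}$ (the half-plane bounded by the chord $v_0v_{k-1}$, since the convex symmetric function $i\mapsto\alpha^i+\alpha^{k-1-i}$ is maximized at the endpoints, giving $\alpha^i+\alpha^{k-1-i}\le 1+\alpha^{k-1}$ for all $i$). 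Now let $(\alpha^a,\alpha^b)\in Q$ be a lattice point. From $xy\ge\alpha^{k-1}$ we get $a+b\ge k-1$; if $a+b=k-1$ it equals some $v_a$. If $a+b\ge k$, then by the AM--GM inequality $\alpha^a+\alpha^b\ge 2\sqrt{\alpha^{a+b}}\ge 2\alpha^{k/2}$, so it suffices to prove $2\alpha^{k/2}>1+\alpha^{k-1}$, as that forces $\alpha^a+\alpha^b>1+\alpha^{k-1}$, contradicting $(\alpha^a,\alpha^b)\in Q$. Hence $Q$ is empty and $h(\alpha)\ge k$.

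The main obstacle, and the only genuinely analytic step, is proving $2\alpha^{k/2}>1+\alpha^{k-1}$ for all $\alpha\in(1,1+1/k^2]$ (not just asymptotically). Writing $\alpha=1+\varepsilon$ and $F(\varepsilon)=2(1+\varepsilon)^{k/2}-(1+\varepsilon)^{k-1}-1$, one has $F(0)=0$ and $F'(0)=k-(k-1)=1>0$, while the only positive zero of $F'$ is $\varepsilon^\ast=(k/(k-1))^{2/(k-2)}-1$ (for $k\ge3$; for $k=2$ there is none), which one checks exceeds $1/k^2$; so $F$ is increasing on $(0,1/k^2]$ and strictly positive there (the cases $k=1,2$ are trivial, $Q$ being a point or a segment). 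This analytic estimate and the guessing of the heptagon are where I expect the real work to lie; everything else is bookkeeping, roughly halved by the $x\leftrightarrow y$ symmetry.
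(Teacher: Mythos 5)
Your constructions for the first and third bounds are sound. The pentagon $(1,\alpha^4),(\alpha,\alpha^4),(\alpha^3,\alpha^3),(\alpha^4,\alpha),(\alpha^4,1)$ works for $\alpha\ge 2$ (the paper uses the more compact pentagon $\{(1,\alpha^2),(\alpha,\alpha),(\alpha^2,1),(\alpha^2,\alpha),(\alpha,\alpha^2)\}$, but your case check goes through: the condition $\alpha^4-2\alpha^3+1>0$ makes $(\alpha^3,\alpha^3)$ a genuine vertex, and every non-vertex lattice point of the box violates either $x+y\le\alpha+\alpha^4$ or one of the two edges incident to $(\alpha^3,\alpha^3)$). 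For the hyperbola construction your argument is essentially the paper's: they take $\{(\alpha^i,\alpha^{k-i}):1\le i\le k\}$ on $xy=\alpha^k$ and reduce emptiness to $2\alpha^{(k+1)/2}\ge\alpha^k+1$, which they prove via Bernoulli's inequality and $(1+s^2)^k\le e^{s^2k}$, ending at $1+s+s^2\ge e^s$; your monotonicity argument for $F(\varepsilon)=2(1+\varepsilon)^{k/2}-(1+\varepsilon)^{k-1}-1$ is a valid alternative, and the claim $\varepsilon^\ast>1/k^2$ does check out (e.g.\ via $\ln(1+\tfrac{1}{k-1})\ge\tfrac{1}{k}$), so that step is fine.

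The genuine gap is the middle case. For $\alpha\in[\tfrac{1+\sqrt5}{2},2)$ you describe the rough shape of a heptagon and state that ``finding the right heptagon is the only non-routine part of this case'' --- but that non-routine part is precisely the content of the claim, and without an explicit vertex set nothing is proved. The paper's construction is
\[Q(\alpha)=\{(1,\alpha^k),(\alpha,\alpha^k),(\alpha^{k-2},\alpha^{k-1}),(\alpha^{k-1},\alpha^{k-1}),(\alpha^{k-1},\alpha^{k-2}),(\alpha^k,\alpha),(\alpha^k,1)\}\]
for a \emph{sufficiently large} $k=k(\alpha)$. Two verifications are needed and neither is routine from your sketch: convex position of the outer six points requires $\alpha^{k-2}(\alpha+1-\alpha^2)<1$, which is where the threshold $\alpha\ge\tfrac{1+\sqrt5}{2}$ actually enters (via $\alpha^2-\alpha-1\ge 0$), while keeping the near-diagonal point $(\alpha^{k-1},\alpha^{k-1})$ as a seventh vertex requires $2\alpha^{k-2}(2-\alpha)\ge 1$, which forces $k\to\infty$ as $\alpha\to 2^-$ --- so a fixed exponent $N$ independent of $\alpha$, as in your pentagon, cannot work here. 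Emptiness is then checked by separating the lattice into points below the line through $(\alpha^{k-2},\alpha^{k-1})$ and $(\alpha^{k-1},\alpha^{k-2})$ and points cut off by the lines $y=\alpha^k$ and $x=\alpha^k$. Your intuition about where $\alpha^2-\alpha-1$ should appear is correct, but the proposal as written does not establish $h(\alpha)\ge 7$ on $[\tfrac{1+\sqrt5}{2},2)$.
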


If $\alpha = 1+\frac{1}{x}$ where $x \in (0,\infty)$, then the lower bound from Theorem~\ref{thm-lowerBound} becomes $h(1+\frac{1}{x}) \geq \lfloor\sqrt{x}\rfloor$.
So with decreasing $\alpha$, the parameter $h(\alpha)$ indeed grows to infinity.

By combining Theorems~\ref{thm-upperBound} and~\ref{thm-lowerBound}, we get the precise value of the Helly numbers of $L(\alpha)$ with $\alpha \geq (1+\sqrt{5})/2$.
In particular, for $\alpha=2$, we obtain a solution to Problem~\ref{prob-Dillon}.

\begin{corollary}
\label{cor-upperBound}
We have $h(\alpha) = 5$ for every $\alpha \geq 2$ and $h(\alpha) = 7$ for every $\alpha \in [\frac{1+\sqrt{5}}{2},2)$.
\end{corollary}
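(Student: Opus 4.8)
The plan is immediate: Corollary~\ref{cor-upperBound} follows by matching the relevant cases of Theorem~\ref{thm-upperBound} against Theorem~\ref{thm-lowerBound}. First I would quote Theorem~\ref{thm-upperBound} in the two regimes of interest, which gives $h(\alpha) \leq 5$ for every $\alpha \geq 2$ and $h(\alpha) \leq 7$ for every $\alpha \in [\frac{1+\sqrt{5}}{2}, 2)$. Then I would quote Theorem~\ref{thm-lowerBound}, which supplies the complementary bounds $h(\alpha) \geq 5$ for every $\alpha \geq 2$ and $h(\alpha) \geq 7$ for every $\alpha \in [\frac{1+\sqrt{5}}{2}, 2)$. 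Since $h(\alpha)$ is a well-defined integer (finite by Theorem~\ref{thm-upperBound}) pinched between equal upper and lower bounds in each of the two ranges, we get $h(\alpha) = 5$ on $[2,\infty)$ and $h(\alpha) = 7$ on $[\frac{1+\sqrt{5}}{2}, 2)$.

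The one thing worth a sentence of care is bookkeeping at the breakpoints. The value $\alpha = \frac{1+\sqrt{5}}{2}$ sits in the closed-endpoint branch "$\alpha \in [\frac{1+\sqrt{5}}{2},2)$" of both theorems, so the bound $7$ applies there from both sides; and $\alpha = 2$ lies in the "$\alpha \geq 2$" branch of both theorems, so in particular the case $\alpha = 2$ is covered and yields $h(\{2^n \colon n \in \mathbb{N}_0\}^2) = 5$, answering Problem~\ref{prob-Dillon}. I would state this explicitly so the reader need not re-check which endpoints are open or closed.

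There is no real obstacle in the corollary itself; it is purely a synthesis step. All the substance is front-loaded into Theorems~\ref{thm-upperBound} and~\ref{thm-lowerBound}: the upper bounds come from a case analysis controlling how many vertices an empty polygon of $L(\alpha)$ can have (via the geometry forced by consecutive powers of $\alpha$, with the thresholds at $2$ and $\frac{1+\sqrt{5}}{2}$ governed by polynomial relations among nearby lattice points), and the lower bounds come from explicit constructions of empty $5$- and $7$-gons in $L(\alpha)$ for $\alpha$ in the respective ranges. Consequently the proof of the corollary is a two- or three-line deduction, and I would present it as such rather than reproving anything.
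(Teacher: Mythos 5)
Your proposal is correct and matches the paper exactly: the corollary is obtained by combining the upper bounds of Theorem~\ref{thm-upperBound} with the matching lower bounds of Theorem~\ref{thm-lowerBound} in the two ranges $\alpha \geq 2$ and $\alpha \in [\frac{1+\sqrt{5}}{2},2)$. Your extra care about the endpoints is harmless but not needed beyond what the theorem statements already make explicit.
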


We prove the following result which shows that even a slight perturbation of $S$ can affect the value $h(S)$ drastically.
We use the \emph{Fibonacci numbers} $(F_n)_{n \in \mathbb{N}_0}$, which are defined as $F_0=1,F_1=1$ and $F_n=F_{n-1}+F_{n-2}$ for every integer $n \geq 2$.

\begin{proposition}
\label{prop-fibonacci}
We have $h(\{F_n \colon n \in \mathbb{N}_0\}^2) = \infty$.
\end{proposition}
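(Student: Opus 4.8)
The plan is to construct, for every $k \in \mathbb{N}$, an empty polygon with $k$ vertices in the lattice $\{F_n : n \in \mathbb{N}_0\}^2$, which by Proposition~\ref{prop-hoffman} shows $h(\{F_n : n \in \mathbb{N}_0\}^2) = H(\{F_n : n \in \mathbb{N}_0\}^2) = \infty$. The key structural fact I would exploit is the defining recurrence $F_n = F_{n-1} + F_{n-2}$, which makes consecutive gaps in the Fibonacci sequence behave additively: the point $(F_{n+1}, F_m)$ is the "vector sum" of $(F_n, F_m)$ and $(F_{n-1}, F_m)$ in the $x$-coordinate. This additivity is exactly the kind of structure that Dillon's method detects, and it is in sharp contrast with the multiplicative gaps of $\{\alpha^n\}$; the point of the proposition is that Fibonacci-type lattices behave like polynomial product sets rather than like exponential ones.

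Concretely, I would look for the empty polygon among points of the form $(F_{a}, F_{b})$ lying close to a line or a strictly convex arc. The cleanest route is to mimic the standard construction of large empty convex polygons in $\mathbb{Z}^2$: pick points $P_i = (F_{n+i}, F_{m+k-i})$ for $i = 0, 1, \dots, k$ (or a similar indexing), check that no three are collinear and that they are in convex position by a gap/slope computation using $F_{n+1}/F_n \to \varphi$, and — the crucial part — verify that the convex hull contains no further lattice point of $\{F_n\}^2$. Emptiness should follow because the only candidate interior lattice points have both coordinates Fibonacci numbers sandwiched between the extreme values $F_n, \dots, F_{n+k}$ and $F_m, \dots, F_{m+k}$, and a short convexity argument (the polygon hugs its hull boundary tightly, with each edge "thin" in lattice terms thanks to $F_{n+1} = F_n + F_{n-1}$) rules those out. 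Alternatively, one can invoke Dillon's theorem directly if the hypotheses of~\cite{dillon21} apply to $A = \{F_n : n \in \mathbb{N}_0\}$: one would just need to verify that $A$ satisfies whatever growth/additive condition Dillon requires, e.g. that ratios of consecutive differences are bounded, which holds since $(F_{n+1}-F_n)/(F_n - F_{n-1}) = F_{n-1}/F_{n-2} \to \varphi$ is bounded above and below.

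The main obstacle will be the emptiness verification: convex position is a routine slope estimate, but showing that the hull of the chosen $k$ points contains no other point with both coordinates in $\{F_n\}$ requires care, since the lattice $\{F_n\}^2$ is quite sparse in a nonuniform way and one must control all $O(k^2)$ candidate points simultaneously. I would handle this by choosing the indices so that the polygon lies in a region where the Fibonacci spacing is locally almost arithmetic at the relevant scale, reducing the problem to the classical fact that a suitable "staircase" polygon in $\mathbb{Z}^2$ is empty, and then transporting emptiness back through the (monotone) coordinate maps $i \mapsto F_{n+i}$. If Dillon's result is black-boxable this obstacle disappears entirely and the proof is a one-line verification of his hypotheses; I would state it that way if the hypotheses are clean, and fall back on the explicit construction otherwise.
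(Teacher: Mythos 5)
Your proposal has a genuine gap on both of its branches. The fallback of invoking Dillon's theorem is unavailable: the paper explicitly notes that $\{F_n \colon n \in \mathbb{N}_0\}^2$ is \emph{not} a product set to which Dillon's method applies (his hypotheses are tailored to sets of polynomial growth such as $\{p(n)\}$ with $\deg p \ge 2$, and fail for exponentially growing sets like $\{2^n\}$ or $\{F_n\}$ -- indeed that failure is the whole motivation for this paper). So the ``one-line verification'' does not exist, and the explicit construction must carry the proof.

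The explicit construction you propose, $P_i=(F_{n+i},F_{m+k-i})$ along a hyperbola-like antidiagonal, is the wrong shape and cannot be made empty for large $k$. Since $F_n=\frac{\varphi^{n+1}-\psi^{n+1}}{\sqrt5}$, to leading order $\{F_n\}^2$ is an affine copy of the exponential lattice $L(\varphi)$, and the paper proves $h(\varphi)\le 7$; in particular the hyperbola construction of Proposition~\ref{prop-lowerBound-phi} only yields empty polygons with at most $\lfloor\sqrt{1/(\varphi-1)}\rfloor=1$ useful vertex at base $\varphi$, because for large $k$ the lattice points near the middle of the next hyperbola fall below the long chord and land inside the hull. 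Any argument that uses only the leading exponential behavior of $F_n$ is therefore doomed, and your suggested repair -- choosing indices where the Fibonacci spacing is ``locally almost arithmetic'' so as to reduce to a staircase in $\mathbb{Z}^2$ -- is not available either, since consecutive ratios tend to $\varphi>1$ everywhere and no such region exists. The missing idea is to exploit the \emph{deviation} from the exponential lattice, namely the alternating correction term $-\psi^{n+1}/\sqrt5$: the paper takes the points $(F_{i+2},F_i)$ for odd $i$, which all lie in a strip of vertical width less than $1/2$ just below the line $y=x/\varphi^2$, shows they form a convex chain via the sign of $F_{i-1}F_{i+1}-F_i^2$ (a Catalan-type identity whose sign alternates with $i$), and shows every other point of the lattice is either above that line or at vertical distance at least $1/2$ below it, so the thin polygon is empty. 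Without identifying this mechanism the proof cannot be completed along the lines you describe.
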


We recall that 
$F_n = \frac{\varphi^{n+1} - \psi^{n+1}}{\sqrt{5}}$ for every $n \in \mathbb{N}_0$, where  $\varphi = \frac{1+\sqrt{5}}{2}$ is the \emph{golden ratio} and $\psi = \frac{1-\sqrt{5}}{2}=1-\varphi$ is its conjugate.
Since $\psi < 1$, this formula shows that the points of $\{F_n \colon n \in \mathbb{N}_0\}^2$ are approaching the points of the scaled exponential lattice $\frac{\varphi}{\sqrt{5}} \cdot L(\varphi) = \{\frac{\varphi}{\sqrt{5}}\cdot \varphi^n \colon n\in \mathbb{N}_0\}^2$.
Thus, Proposition~\ref{prop-fibonacci} is in sharp contrast with the fact that $h(\frac{\varphi}{\sqrt{5}} \cdot L(\varphi))=h(\varphi)\leq 7$, which follows from Theorem~\ref{thm-upperBound} and from the fact that affine transformations of any set $S \subseteq \mathbb{R}^d$ do not change $h(S)$.
We also note Dillon's method~\cite{dillon21} does not imply $h(\{F_n \colon n \in \mathbb{N}_0\}^2) = \infty$.

We also consider the more general case of exponential lattices where the rows and the columns might use different bases.
For real numbers $\alpha>1$ and $\beta>1$, let $L(\alpha,\beta)$ be the set $\{\alpha^n \colon n \in \mathbb{N}_0\}\times\{\beta^n \colon n \in \mathbb{N}_0\}$.
Note that $L(\alpha) = L(\alpha,\alpha)$ for every $\alpha>1$.

As our last main result, we fully characterize exponential lattices $L(\alpha,\beta)$ with finite Helly numbers $h(L(\alpha,\beta))$, settling the question of finiteness of Helly numbers of planar exponential lattices completely.

\begin{theorem}
\label{thm-nondiagonal}
Let $\alpha>1$ and $\beta>1$ be real numbers.
Then, $h(L(\alpha,\beta))$ is finite if and only if $\log_\alpha(\beta)$ is a rational number.

Moreover, if $\log_\alpha(\beta) \in \mathbb{Q}$, that is, $\beta=\alpha^{p/q}$ for some $p,q \in \mathbb{N}$, then
\[\left\lfloor \frac{1}{pq} \left\lfloor\sqrt{\frac{1}{\alpha^{1/q} - 1} } \right \rfloor  \right\rfloor \leq h(L(\alpha,\beta)) \leq pq\cdot h(\alpha^p).\]
\end{theorem}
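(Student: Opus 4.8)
The plan is to prove the three parts of Theorem~\ref{thm-nondiagonal} in turn: (a) the upper bound $h(L(\alpha,\beta))\le pq\cdot h(\alpha^p)+1$, which together with Theorem~\ref{thm-upperBound} gives finiteness when $\log_\alpha(\beta)\in\mathbb Q$; (b) the matching lower bound, extracted from Theorem~\ref{thm-lowerBound}; and (c) the divergence $h(L(\alpha,\beta))=\infty$ when $\log_\alpha(\beta)\notin\mathbb Q$. It is convenient to record a point $(\alpha^n,\beta^m)\in L(\alpha,\beta)$ by its \emph{exponent pair} $(n,m)\in\mathbb N_0^2$, and to use repeatedly that any coordinatewise scaling $(x,y)\mapsto(\lambda x,\mu y)$ with $\lambda,\mu>0$ is affine and hence preserves both convex position and the quantity $h$.

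For (a), write $\beta=\alpha^{p/q}$ and set $\gamma=\alpha^p=\beta^q$. The structural observation is that $L(\alpha,\beta)$ is the disjoint union of the $pq$ classes
\[
L_{i,j}=\{(\alpha^{i+pa},\beta^{j+qb})\colon a,b\in\mathbb N_0\},\qquad 0\le i<p,\ 0\le j<q,
\]
obtained by splitting the two exponents by their residues modulo $p$ and modulo $q$; each $L_{i,j}$ is the affine image of $L(\gamma)=L(\alpha^p)$ under $(x,y)\mapsto(\alpha^i x,\beta^j y)$, so $h(L_{i,j})=h(\alpha^p)$. Now let $P=\mathrm{conv}(V)$ be an empty polygon in $L(\alpha,\beta)$ and put $V_{i,j}=V\cap L_{i,j}$. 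Each $V_{i,j}$, being a subset of $V$, is in convex position; and since $P$ is empty in $L(\alpha,\beta)$ while $\mathrm{conv}(V_{i,j})\subseteq P$ and $L_{i,j}\subseteq L(\alpha,\beta)$, the polygon $\mathrm{conv}(V_{i,j})$ contains no point of $L_{i,j}$ beyond its vertices $V_{i,j}$, whence $|V_{i,j}|\le h(L_{i,j})=h(\alpha^p)$. Summing over the $pq$ classes gives $|V|\le pq\cdot h(\alpha^p)$, which is finite by Theorem~\ref{thm-upperBound} and is within the claimed bound.

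For (b), put $\delta=\alpha^{1/q}$, so that $L(\alpha,\beta)=\{\delta^{qn}\colon n\in\mathbb N_0\}\times\{\delta^{pm}\colon m\in\mathbb N_0\}\subseteq L(\delta)$. We may assume $\delta<\tfrac{1+\sqrt5}{2}$, since otherwise $\lfloor\sqrt{1/(\delta-1)}\rfloor\le1$ and the claimed bound is at most $1$, which is trivial. By Theorem~\ref{thm-lowerBound} there is an empty polygon $P_0$ in $L(\delta)=L(\delta,\delta)$ with at least $M:=\lfloor\sqrt{1/(\delta-1)}\rfloor$ vertices. Colour each vertex $(\delta^{i},\delta^{j})$ of $P_0$ by $(i\bmod q,\ j\bmod p)$; by pigeonhole some colour class $W$ has $|W|\ge\lceil M/(pq)\rceil$ vertices, all with $i\equiv\bar c\pmod q$ and $j\equiv\bar d\pmod p$. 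Choose $a,b\in\mathbb N_0$ with $a\equiv-\bar c\pmod q$ and $b\equiv-\bar d\pmod p$, and apply the affine scaling $T(x,y)=(\delta^{a}x,\delta^{b}y)$; then $T(W)\subseteq L(\alpha,\beta)$. Since $W$ is a subset of a convex-position vertex set, $\mathrm{conv}(T(W))$ contains no point of the rescaled copy $T(L(\delta))=\{(\delta^{a+k},\delta^{b+l})\colon k,l\in\mathbb N_0\}$ other than its vertices $T(W)$; moreover every point of $\mathrm{conv}(T(W))$ has first coordinate $\ge\delta^{a}$ and second coordinate $\ge\delta^{b}$, so every point of $L(\alpha,\beta)$ lying in $\mathrm{conv}(T(W))$ already lies in $T(L(\delta))$ and is therefore a vertex. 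Thus $\mathrm{conv}(T(W))$ is an empty polygon in $L(\alpha,\beta)$, and $h(L(\alpha,\beta))\ge|W|\ge\big\lfloor\tfrac1{pq}\lfloor\sqrt{1/(\delta-1)}\rfloor\big\rfloor$.

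The remaining assertion, that $h(L(\alpha,\beta))=\infty$ when $\gamma:=\log_\alpha(\beta)$ is irrational, is where I expect the real difficulty to lie. Note first that the quantitative idea of (b) is of no help here: importing a large empty polygon of $L(\alpha^{1/q})$ and keeping only the vertices whose $x$-exponent is divisible by $q$ leaves roughly $\lfloor\sqrt{1/(\alpha^{1/q}-1)}\rfloor/q$ vertices, which tends to $0$ as $q\to\infty$. Instead one must build empty polygons of unbounded size \emph{directly} in $L(\alpha,\beta)$, and here the irrationality of $\gamma$ is precisely the resource available: since $\gamma\notin\mathbb Q$, the sequence $(\{\gamma m\})_{m\in\mathbb N_0}$ is equidistributed in $[0,1)$, so $\{\,n+\gamma m\colon n,m\in\mathbb N_0\,\}$ is dense in $[0,\infty)$ and every interval contains $n+\gamma m$ for many admissible pairs. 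The plan is, given a target $N$, to select exponent pairs $(n_1,m_1),\dots,(n_N,m_N)$ so that the points $v_i=(\alpha^{n_i},\beta^{m_i})$ are in convex position and $\mathrm{conv}\{v_1,\dots,v_N\}$ meets $L(\alpha,\beta)$ only in $\{v_1,\dots,v_N\}$; the freedom to make $\gamma m_i$ approximate any prescribed value arbitrarily well (for a suitable $n_i$) is what allows such a choice for every $N$. The hard part will be the emptiness: a convex polygon whose vertices lie near the (convex) curves that naturally carry these points develops, over a long subarc, a wide ``belly'' which might capture a further point of $L(\alpha,\beta)$, so the carrier curve, the subarc, the admissible pairs, and the approximation precision — balanced against the finitely many candidate points of $L(\alpha,\beta)$ in the relevant bounded window — must all be chosen with care; this is the technical heart of the theorem. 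Granting this, letting $N\to\infty$ yields $h(L(\alpha,\beta))=\infty$ and completes the characterisation.
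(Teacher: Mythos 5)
Your parts (a) and (b) are correct and essentially match the paper's argument for the rational case. In (a) you run the same mod-$(p,q)$ decomposition of $L(\alpha,\beta)$ into $pq$ affine copies of $L(\alpha^p)$, just in the contrapositive: bounding each class $V_{i,j}$ by $h(\alpha^p)$ directly (the key point, which you handle correctly, is that a vertex of $P$ lying in $\mathrm{conv}(V_{i,j})$ must be an extreme point of that hull and hence belong to $V_{i,j}$). In (b) your pigeonhole-plus-rescaling argument, applied to an arbitrary maximal empty polygon of $L(\alpha^{1/q})$, is a clean substitute for the paper's explicit subselection of the hyperbola construction via the congruences $q\mid i$, $p\mid(k-i)$; your check that points of $L(\alpha,\beta)$ inside $\mathrm{conv}(T(W))$ must already lie in the shifted copy $T(L(\delta))$ closes the only real loophole there.

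Part (c) is where the proposal has a genuine gap, and you say so yourself: the emptiness of the large polygon is exactly the content of the theorem in the irrational case, and ``granting this'' grants the theorem. Density or equidistribution of $\{n+\gamma m\}$ is not the right resource: it produces many candidate vertices but gives no mechanism for excluding other lattice points from the hull. The idea you are missing is to choose the exponent pairs to be \emph{best one-sided rational approximations} of $\gamma=\log_\alpha(\beta)$. Concretely, if some partial quotient $a_{n+1}$ of the continued fraction of $\gamma$ is at least $m$, the paper takes the $a_{n+1}+1\ge m$ points $w_i=(\alpha^{p_{n-1}+ip_n},\beta^{q_{n-1}+iq_n})$ indexed by the semi-convergents at level $n$; convexity reduces to $\beta^{q_n}/\alpha^{p_n}>1$, and emptiness is certified by the Han\v{c}l--Turek characterization of best upper approximations: a point $(\alpha^p,\beta^q)$ of $L(\alpha,\beta)$ interior to the polygon would satisfy $p-\gamma q<p_{n,i-1}-\gamma q_{n,i-1}$ with an intermediate denominator, i.e.\ it would be a best upper approximation that is not a semi-convergent, a contradiction. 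When $\gamma$ is badly approximable (all $a_i$ bounded) this single-level family stays bounded in size, so a second construction is needed: one takes the convergents $(\alpha^{p_n},\beta^{q_n})$ over all odd $n\ge n_0$, and the verification of convex position now genuinely uses the bound $|\gamma-p/q|>c/q^2$. Without this approximation-theoretic input --- and without the restricted/unrestricted case split it forces --- the ``belly'' problem you identify cannot be resolved, so the only-if direction remains unproved in your proposal.
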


The proof of Theorem~\ref{thm-nondiagonal} is based on Theorem~\ref{thm-upperBound} and on the theory of continued fractions and Diophantine approximations.

\subsubsection*{Open problems}
First, it is natural to try to close the gap between the upper bound from Theorem~\ref{thm-upperBound} and the lower bound from Theorem~\ref{thm-lowerBound} and potentially obtain new precise values of $h(\alpha)$.

Second, we considered only the exponential lattice in the plane, but it would be interesting to obtain some estimates on the Helly numbers of exponential lattices $\{\alpha^n \colon n \in \mathbb{N}_0\}^d$ in dimension $d > 2$.
In particular, are these numbers finite?

We also mention the following conjecture of De Loera, La Haye, Oliveros, and Rold\'{a}n-Pensado~\cite{deHayeOliRol17}, which inspired the research of Dillon~\cite{dillon21}.

\begin{conjecture}[\cite{deHayeOliRol17}]
\label{conj-primes}
If $\mathcal{P}$ is the set of prime numbers, then $h(\mathcal{P}^2)=\infty$.
\end{conjecture}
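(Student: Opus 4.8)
The plan is to construct, for every $N$, an empty convex polygon in $\mathcal{P}^2$ with $N$ vertices; by Proposition~\ref{prop-hoffman} this gives $h(\mathcal{P}^2)=\infty$. The guiding idea is a reduction that uses \emph{consecutive} primes to confine all possible interior prime pairs to a finite grid. I would fix a block of consecutive primes $p_1<p_2<\cdots<p_N$ and a second block of consecutive primes listed in decreasing order as $q_1>q_2>\cdots>q_N$, and take as vertices the ``diagonal'' points $V_i=(p_i,q_i)$. The extreme vertices in the two coordinates are $p_1,p_N$ and $q_N,q_1$, so every lattice point of the polygon has $x\in\{p_1,\dots,p_N\}$ and $y\in\{q_N,\dots,q_1\}$. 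Since the $p_i$ are consecutive primes, every integer in $(p_1,p_N)$ other than the $p_i$ is composite, and likewise for the $q_j$; hence a lattice point of the polygon can be a prime pair only if it lies in the grid $G=\{p_1,\dots,p_N\}\times\{q_1,\dots,q_N\}$. As the intended vertices are exactly the diagonal of $G$, the whole problem reduces to a purely combinatorial-geometric condition on $G$: the points $V_1,\dots,V_N$ must be in convex position, and no off-diagonal point $(p_i,q_j)$ with $i\ne j$ may lie in the closed polygon.

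First I would analyze convex position. Writing $a_i=p_{i+1}-p_i$ and $b_i=q_i-q_{i+1}$ for the consecutive gaps in the two blocks, the edge $V_iV_{i+1}$ has slope $-b_i/a_i$, so the diagonal is in convex position exactly when the ratios $b_i/a_i$ are monotone in $i$. Assuming this, the $V_i$ bound a thin ``fan'' between the convex chain and the long chord $\overline{V_1V_N}$. One of the two triangular families of off-diagonal points (those lying on the far side of the chain) is then automatically outside the polygon; for the other family I would check, point by point, that each $(p_i,q_j)$ lies on the outer side of the chord $\overline{V_1V_N}$, which amounts to a flatness inequality on the gap sequences that I expect to verify by a direct comparison of the partial sums of the $a_i$ and of the $b_i$. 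A mild genericity condition, ensuring no off-diagonal grid point lies \emph{on} an edge, handles boundary points. Thus everything hinges on producing two blocks of consecutive primes whose gap sequences are ratio-monotone and sufficiently flat.

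The hard part will be exactly this control of prime gaps. Achieving convex position requires arbitrarily long runs of consecutive primes along which two (independent) gap sequences have monotone ratio, and the existence of such long monotone runs of prime gaps is not known unconditionally. Under the Hardy--Littlewood prime $k$-tuple conjecture one can prescribe any admissible gap pattern among consecutive primes, and in particular realize an increasing gap sequence in one block and a compatible one in the other; this would make the diagonal a convex cap and, after the flatness check, yield the desired empty $N$-gon. Establishing the conjecture unconditionally seems to demand a genuinely new input about the fine distribution of primes.

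As an alternative, potentially unconditional route I would abandon the grid reduction and argue by density at a very large scale. By Andrews' theorem a convex lattice $N$-gon has area $\gg N^3$, while near $(X,X)$ prime pairs have density $\sim(\log X)^{-2}$; choosing $X=\exp(\Theta(N^{3/2}))$ drives the expected number of interior prime pairs of a convex $N$-gon on prime pairs to $0$. The remaining and principal difficulty is to guarantee that the prime pairs in $[1,X]^2$ actually contain a convex $N$-gon with empty interior: abstract point sets need not contain large empty convex polygons (Horton sets have none with $7$ or more vertices), so one must exploit the pseudorandomness and coordinatewise independence of primality---presumably through sieve estimates or again the Hardy--Littlewood conjectures---to rule out such obstructions. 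I regard the statement that sparse ``prime-like'' sets must contain arbitrarily large empty convex polygons as the true crux of the conjecture.
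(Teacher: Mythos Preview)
The statement you are attempting to prove is labeled in the paper as a \emph{Conjecture}, not a Theorem: the paper gives no proof, and merely records it as an open problem together with the computer-search lower bound $h(\mathcal{P}^2)\ge 14$ of Summers. There is therefore no ``paper's own proof'' to compare your proposal against.

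As for the proposal itself, you have not produced a proof, and to your credit you essentially say so. Your first approach requires, for every $N$, two blocks of $N$ consecutive primes whose gap sequences $(a_i)$ and $(b_i)$ have monotone ratio $b_i/a_i$; the existence of arbitrarily long monotone runs of consecutive prime gaps is open, and you correctly note that this step seems to need the Hardy--Littlewood $k$-tuple conjecture. Even granting monotonicity, the ``flatness'' check that all off-diagonal grid points lie outside the chord $\overline{V_1V_N}$ is not a triviality: it imposes further quantitative constraints on the two gap sequences simultaneously, and you have not specified any mechanism for realizing them. Your second approach via Andrews' theorem is a heuristic density count, not a proof: as you yourself observe, the Horton-type obstruction shows that one cannot conclude existence of large empty convex polygons from density alone, and the needed pseudorandomness input for primes is again conjectural.

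In short: the paper treats this as an open conjecture, and your write-up is an informed discussion of possible lines of attack rather than a proof. Both of your routes rest on unproven statements about primes of strength comparable to Hardy--Littlewood, which is exactly why the problem remains open.
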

\noindent
Using computer search, Summers~\cite{summers15} showed that $h(\mathcal{P}^2) \geq 14$.

\section{Proof of Theorem~\ref{thm-upperBound}}
\label{sec-upperBound}

Here, we prove Theorem~\ref{thm-upperBound} by showing that the number $h(\alpha)$ is finite for every $\alpha > 1$.
This follows from the upper bounds
$h(\alpha) \leq 5$ for $\alpha \geq 2$, $h(\alpha) \leq 7$ for every $\alpha \geq [\frac{1+\sqrt{5}}{2},2)$, and
\[h(\alpha) \leq 3\left\lceil\log_\alpha\left(\frac{\alpha}{\alpha-1}\right)\right\rceil+3\] 
for any $\alpha \in (1,\frac{1+\sqrt{5}}{2})$.

We start by introducing some auxiliary definitions and notation.
Let $\alpha > 1$ be a real number and consider the exponential lattice $L(\alpha)$.
For $i \in \mathbb{N}_0$, the \emph{$i$th column} of $L(\alpha)$ is the set $\{(\alpha^i,\alpha^n )\colon n \in \mathbb{N}_0\}$.
Analogously, the \emph{$i$th row} of $L(\alpha)$ is the set $\{(\alpha^n, \alpha^i)\colon n \in \mathbb{N}_0\}$.

For a point $p$ in the plane, we write $x(p)$ and $y(p)$ for the $x$- and $y$-coordinates of $p$, respectively.
Let $P$ be an empty convex polygon in $L(\alpha)$.
Let $e$ be an edge of $P$ connecting vertices $u$ and $v$ where $x(u)<x(v)$ or $y(u)<y(v)$ if $x(u)=x(v)$.
We use $\overline{e}$ to denote the line determined by $e$ and oriented from $u$ to $v$.
The \emph{slope of $e$} with $x(u)<x(v)$ is the slope of $\overline{e}$, that is, $\frac{y(v)-y(u)}{x(v)-x(u)}$.

We distinguish four types of edges of $P$; see part~(a) of Figure~\ref{fig-types}.
Roughly speaking, the type of an edge is exactly the quadrant where the normal vector to this edge points to (up to the boundaries of the quadrants).
First, assume $x(u) \neq x(v)$ and $y(u) \neq y(v)$.
We say that $e$ is of \emph{type I} if the slope of $e$ is negative and $P$ lies to the right of $\overline{e}$.
Similarly, $e$ is of \emph{type II} if the slope of $e$ is positive and $P$ lies to the right of $\overline{e}$.
An edge $e$ has \emph{type III} if the slope of $e$ is negative and $P$ lies to the left of $\overline{e}$.
Finally, \emph{type IV} is for $e$ with positive slope and with $P$ lying to the left of $\overline{e}$.
It remains to deal with horizontal and vertical edges of $P$.
A horizontal edge $e$ is of type II if $P$ lies below $\overline{e}$ and is of type III otherwise.
Similarly, a vertical edge $e$ is of type IV if $P$ lies to the left of $\overline{e}$ and is of type III otherwise.

\begin{figure}[ht]
    \centering
    \includegraphics{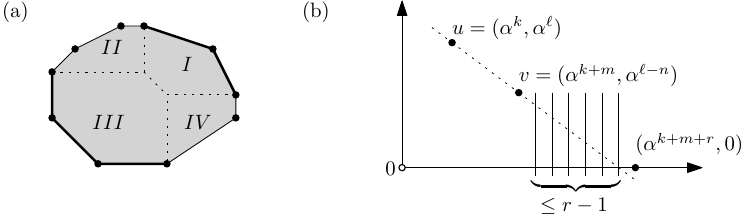}
    \caption{(a) The four types of edges of a convex polygon. (b) An illustration of the proof of Lemma~\ref{lem-typeI}.}
    \label{fig-types}
\end{figure}

Note that each edge of $P$ has exactly one type and that the types partition the edges of $P$ into four convex chains.
We first provide an upper bound on the number of edges of those chains of $P$ and then derive the bound on the total number of edges of $P$ by summing the four bounds.
We start by estimating the number of edges of $P$ of type I.

\begin{lemma}
\label{lem-typeI}
The polygon $P$ has at most $\left\lceil \log_{\alpha}\left(\frac{\alpha}{\alpha-1}\right) \right\rceil$ edges of type I.
\end{lemma}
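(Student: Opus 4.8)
The plan is to exploit the key geometric feature of type I edges: such an edge $e=uv$ has negative slope with $P$ to its right, so in particular $x(u)<x(v)$ and $y(u)>y(v)$, and—because $e$ is on the lower-left boundary of $P$—the endpoints $u,v$ are lattice points with $u$ "higher and to the left" of $v$. I would set up coordinates by writing each vertex of $P$ as $(\alpha^{a},\alpha^{b})$ with $a,b\in\mathbb{N}_0$. Label the type I edges in order along the chain as $e_1,\dots,e_k$, sharing consecutive vertices $v_0,v_1,\dots,v_k$ with $x(v_0)<x(v_1)<\cdots<x(v_k)$ and $y(v_0)>y(v_1)>\cdots>y(v_k)$. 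The goal is to show $k\le\lceil\log_\alpha(\alpha/(\alpha-1))\rceil$.

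The main idea I would pursue is a convexity/emptiness argument on a single well-chosen column or row. Consider the topmost vertex $v_0=(\alpha^{a_0},\alpha^{b_0})$ of the type I chain and the column $x=\alpha^{a_0}$ (or symmetrically the row through the bottom vertex). Because $P$ is convex and the chain $e_1,\dots,e_k$ bounds $P$ from the lower-left, the portion of the vertical line $x=\alpha^{a_0}$ lying inside or on the boundary of $P$ is an interval whose lower endpoint is $y(v_0)=\alpha^{b_0}$; more importantly, emptiness of $P$ forces that there is no lattice point of $L(\alpha)$ strictly inside $P$ on that line, and by pushing the line slightly I can control how many rows the chain can "drop through." The cleaner route, which I expect the authors take (see part~(b) of Figure~\ref{fig-types}), is: each edge $e_i$ of type I, extended, has $P$ on its right; project the vertices $v_0,\dots,v_k$ horizontally onto the vertical line through $v_k$ (the rightmost, lowest type-I vertex). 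Convexity of the chain implies these projected points, together with $v_k$, are all at distinct heights, and the emptiness of $P$ together with the fact that the segment from $v_0$ down to $v_k$ passes over these $\alpha$-spaced rows forces $k$ consecutive rows $\alpha^{b_k},\alpha^{b_k+1},\dots$ to fit below $y(v_0)=\alpha^{b_0}$ but above a line through $v_k$ whose $y$-intercept is bounded below by $\alpha^{b_0}-(\alpha^{b_0})=0$—i.e. the relevant ratio $\alpha^{b_0}/\alpha^{b_k}$ is at most $\alpha/(\alpha-1)$. Taking $\log_\alpha$ and using integrality yields the bound.

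Concretely, the key inequality I would aim to establish is that consecutive type I edges force the ratio of successive $y$-coordinates to stay below $\alpha/(\alpha-1)$: if $v_{i-1}$, $v_i$, $v_{i+1}$ are three consecutive vertices of the chain, then convexity (the slope of $e_{i+1}$ is less steep in absolute value, or the turn is convex toward $P$) combined with the lattice structure forces $y(v_{i-1})/y(v_i)$ and $x(v_i)/x(v_{i-1})$ to interact so that after $k$ steps $y(v_0)/y(v_k)\le \alpha/(\alpha-1)$; since $y(v_0)/y(v_k)=\alpha^{b_0-b_k}\ge\alpha^k$ (as the $b_i$ are strictly decreasing integers, so drop by at least $1$ each step), we get $\alpha^k\le\alpha/(\alpha-1)$, hence $k\le\log_\alpha(\alpha/(\alpha-1))$, and rounding up gives the stated ceiling. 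The main obstacle, and where I would spend the most care, is justifying that single clean inequality $y(v_0)/y(v_k)\le\alpha/(\alpha-1)$ rather than something weaker: one must use both that $P$ lies to the right of every $\overline{e_i}$ (so a point just below-left of $v_0$ on the boundary forces the extended first edge to cut across the rows) and that $P$ is empty (so no lattice point sits in the thin triangular region swept out), pinning down the extreme configuration. A convenient way to package this is to note that the line $\overline{e_1}$ through $v_0,v_1$ has $P$ on its right, so $v_k$ lies on or to the right of $\overline{e_1}$; writing down the equation of $\overline{e_1}$ and plugging in $x(v_k)\ge x(v_1)=\alpha^{a_1}$ and the fact that the chain drops at least one row per edge gives the required bound after simplification.
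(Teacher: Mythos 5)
Your proposal contains a genuine gap: the ``key inequality'' you set out to prove, namely $y(v_0)/y(v_k)\le \alpha/(\alpha-1)$, is false. A single type~I edge can already drop arbitrarily many rows: for instance, for any $\alpha\ge 2$ the triangle with vertices $(1,\alpha^{N})$, $(1,\alpha^{N-1})$, $(\alpha,1)$ is empty in $L(\alpha)$, and its edge from $(1,\alpha^{N})$ to $(\alpha,1)$ is of type~I with $y(v_0)/y(v_1)=\alpha^{N-1}$, which exceeds $\alpha/(\alpha-1)$ for $N$ large. Your accounting conflates the total row-drop of the chain (which is unbounded) with the number of its edges (which is what must be bounded); indeed, if your inequality were true it would force $\alpha^k\le\alpha/(\alpha-1)$, hence $k=0$ for every $\alpha>2$, i.e.\ no type~I edge at all, which is clearly too strong.

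The correct bookkeeping --- which your final sentence brushes against but does not set up --- treats the first edge as ``free'' and measures \emph{columns}, not rows, starting from its right endpoint $v_1$ rather than from $v_0$. This is what the paper does: writing the leftmost type~I edge as $u=(\alpha^k,\alpha^\ell)$, $v=(\alpha^{k+m},\alpha^{\ell-n})$, one shows by direct substitution that the point $(\alpha^{k+m+r},0)$ with $r=\lceil\log_\alpha(\alpha/(\alpha-1))\rceil$ lies strictly above the line $\overline{uv}$ (the computation reduces, after taking the worst case $n=1$ and letting $m\to\infty$, to $\alpha^r-\alpha^{r-1}\ge 1$). Since every vertex of $P$ lies on the $P$-side of $\overline{uv}$ and has $y$-coordinate at least $1$, no vertex of the type~I chain can lie in column $k+m+r$ or beyond; as each subsequent type~I edge advances at least one column, there are at most $r-1$ edges after the first, giving $r$ in total. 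You should replace your $y$-ratio argument by this column-counting argument (or carry out the substitution you allude to at the end and draw the column-based, not row-based, conclusion from it).
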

\begin{proof}
First, let $r = \left\lceil\log_{\alpha}\left(\frac{\alpha}{\alpha-1}\right)\right\rceil$ and note that $r\geq 1$ as $\alpha>1$.
Let $e$ be the left-most edge of $P$ of type I and let $u$ and $v$ be the vertices of $e$.
Since $e$ is of type I, we have $u=(\alpha^k,\alpha^\ell)$ and $v=(\alpha^{k+m},\alpha^{\ell-n})$ for some positive integers $k$, $\ell$, $m$, and $n$.

We will show that the point $(\alpha^{k+m+r},0)$ lies above the line $\overline{e}$.
Since there are at most $r-1$ columns of $L(\alpha)$ between the vertical line containing $v$ and the vertical line containing $(\alpha^{k+m+r},0)$ and the point $(\alpha^{k+m+r},0)$ is below the lowest row of $L(\alpha)$, it then follows that there are at most $r$ edges of $P$ of type I; see part~(b) of Figure~\ref{fig-types}.

Since the line $\overline{e}$ contains $u$ and $v$, we see that
\[\overline{e} = \{(x,y) \in \mathbb{R}^2 \colon (\alpha^\ell - \alpha^{\ell-n})x + (\alpha^{k+m} - \alpha^k)y = \alpha^{k+\ell+m} - \alpha^{k+\ell-n}\}.\]
It suffices to check that by substituting the coordinates of the point $(\alpha^{k+m+r},0)$ into the equation of the line $\overline{e}$ gives a left side that is at least $\alpha^{k+\ell+m} - \alpha^{k+\ell-n}$.
The left side equals $\alpha^{k+\ell+m+r} - \alpha^{k+\ell+m-n+r}$ and thus we want 
\[\alpha^{k+\ell+m+r} - \alpha^{k+\ell+m-n+r} \geq \alpha^{k+\ell+m} - \alpha^{k+\ell-n}.\]
By dividing both sides by $\alpha^{k+\ell}$ and by rearranging the terms, we can rewrite this expression as
\[\alpha^{-n}(1 - \alpha^{m+r}) \geq \alpha^{m}-\alpha^{m+r}.\]
Since $m,r>0$ and $\alpha>1$, we get $(1-\alpha^{m+r})<0$ and thus the left side is increasing as $n$ increases, so we can assume $n=1$, leading to
\[\alpha^{-1} - \alpha^{m+r-1} \geq \alpha^m-\alpha^{m+r}.\]
We can again rearrange the inequality as
\[\alpha^r -\alpha^{r-1} - 1 \geq -\alpha^{-1-m},\]
where the right side is negative and approaches 0 as $m$ tends to infinity, so we can replace it by 0, obtaining
\[\alpha^r -\alpha^{r-1} \geq 1.\]
This inequality is satisfied by our choice of $r$.
\end{proof}

We now estimate the number of edges of $P$ that are of type III.

\begin{lemma}
\label{lem-typeIII}
The polygon $P$ has at most $2\lceil\log_\alpha\left(\frac{\alpha+1}{\alpha}\right)\rceil+1$ edges of type III for $1< \alpha < 2$ and at most $2$ such edges for $\alpha \geq 2$.
\end{lemma}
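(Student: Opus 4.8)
The plan is to bound the type III edges by exploiting the structure of the type III chain. Recall that the type III edges form a convex chain — the lower‑left boundary of $P$ — running from the leftmost vertex $\ell$ of $P$ to its bottommost vertex, with slopes increasing monotonically from "nearly vertical'' (a vertical left edge, if present, counting as slope $-\infty$) through negative slopes to "nearly horizontal'' (a horizontal bottom edge counting as slope $0$). I would treat the regimes $\alpha\ge 2$ and $1<\alpha<2$ separately, since the stated bounds behave very differently there.

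For $\alpha\ge 2$, I would show directly that no empty polygon has three consecutive type III edges. Suppose it does, with consecutive vertices $q_i=(\alpha^{c_i},\alpha^{d_i})$, $i=0,1,2,3$, where $c_0<c_1<c_2<c_3$ and $d_0>d_1>d_2>d_3\ge 0$. I would prove that the lattice point $q^\ast=(\alpha^{c_1+1},\alpha^{d_1})$ lies in the interior of $\operatorname{conv}\{q_0,q_1,q_2,q_3\}\subseteq P$, contradicting emptiness. It lies above the edges $q_1q_2$ and $q_2q_3$ because those edges have negative slope and $\alpha^{c_1+1}$ is within (or at the right end of) their $x$‑spans, and above $q_0q_1$ by convexity; the one substantive point is that $q^\ast$ lies \emph{strictly below} the chord $q_0q_3$. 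Since $q_1$ already lies strictly below $q_0q_3$, passing from column $c_1$ to column $c_1+1$ lowers the chord by $|\mathrm{slope}(q_0q_3)|\cdot\alpha^{c_1}(\alpha-1)$, and because $\alpha-1\ge 1$ this drop is absorbed by the available slack; after clearing denominators the inequality to be checked reduces to a cubic in $\alpha$ that holds exactly for $\alpha\ge 2$ (in the worst case of a long column‑span and a minimal row‑drop). The degenerate configurations in which one of the three edges is vertical or horizontal are handled by the same choice of test point, and the vertical and horizontal edges themselves can occur only at the two ends of the chain, contributing at most the two allowed edges.

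For $1<\alpha<2$, I would split the type III edges into a \emph{steep} part (slope $<-1$, together with the vertical left edge if present) and a \emph{flat} part (slope $>-1$, together with the horizontal bottom edge if present); by strict convexity at most one edge has slope exactly $-1$. The reflection in the line $y=x$ is an isometry preserving $L(\alpha)$; it sends empty polygons to empty polygons, maps type III chains to type III chains (reversed), and swaps the steep and flat parts (a slope $m<-1$ becomes $1/m\in(-1,0)$). Hence it suffices to prove that every empty polygon has at most $r':=\lceil \log_\alpha((\alpha+1)/\alpha)\rceil$ steep type III edges; applying this to $P$ and to its reflection then bounds the flat part as well, and the total number of type III edges is at most $2r'+1$, as claimed. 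To bound the steep sub‑chain I would argue in the spirit of Lemma~\ref{lem-typeI}: this sub‑chain lies in the thin "wedge'' of $P$ near the leftmost vertex $\ell$, capped above by the type II edge incident to $\ell$ and below by the sub‑chain itself. Writing down the equation of the capping line, substituting a test point pushed out past column $0$ of $L(\alpha)$ (i.e.\ with $x$‑coordinate below $1$), and using convexity together with emptiness (no lattice point may lie inside the wedge), one shows the wedge can remain narrower than one lattice row for at most $r'$ columns; after dividing by a common power of $\alpha$ and optimizing over the integer parameters, the required inequality collapses to $\alpha^{r'}\ge (\alpha+1)/\alpha$, which holds by the choice of $r'$.

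I expect the main obstacle to be exactly this last step in the regime $1<\alpha<2$. Unlike the type I chain, which is controlled simply because $L(\alpha)$ has no columns of negative index, the steep type III chain faces the dense corner of $L(\alpha)$ near $(1,1)$, so no bound can come from the half‑plane of a single edge alone (a chain of steep edges on a hyperbola arc stays on the correct side of every one of its own edge‑lines yet is never empty); the argument genuinely has to bring in the adjacent type II edge and the emptiness of the wedge, and identifying the right extremal edge and test point is the delicate part. Verifying the cubic inequality in the $\alpha\ge 2$ case and carefully accounting for the vertical, horizontal, and slope‑$(-1)$ edges so as to land exactly on $2\lceil \log_\alpha((\alpha+1)/\alpha)\rceil+1$ are the remaining, essentially routine, points.
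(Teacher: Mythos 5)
Your $\alpha\ge 2$ case is essentially the paper's argument reflected in the line $y=x$ (the paper places the test point one row \emph{above} the third vertex of the chain rather than one column to the right of the second), and it is fine in outline, although the inequality you defer to is not a mysterious cubic: after dividing by the common power of $\alpha$ and passing to the worst case it collapses to $2/\alpha\le 1$.

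The genuine gap is in the regime $1<\alpha<2$. Your plan rests on the sub-claim that the \emph{steep} part of the type III chain has at most $r'=\lceil\log_\alpha((\alpha+1)/\alpha)\rceil$ edges, and you propose to prove this via a wedge at the leftmost vertex capped by the incident type II edge, with a test point of $x$-coordinate below $1$. Nothing in the sketch turns this into an inequality: the width of that wedge is governed by the slope of the type II edge, which is completely unconstrained by the hypotheses of the lemma (and such an edge need not even exist --- the leftmost vertex may be the topmost one, or the edge above it may be vertical, hence itself of type III under the paper's conventions); moreover a test point to the left of column $0$ is not a lattice point and cannot witness non-emptiness the way the point below row $0$ does in Lemma~\ref{lem-typeI}, where the chain is trapped \emph{below} a fixed line and runs out of rows. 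Your diagnosis that the type III chain cannot be controlled intrinsically and must borrow the type II edge is also not correct: the paper's proof uses only the convex hull $Q$ of the type III chain itself. It sets $t=r'$, $s=t+1$, assumes the chain has $s+t+1$ edges with vertices $v_1,\dots,v_{s+t+2}$, and shows that the lattice point $v'=(x(v_{s+2}),\alpha\cdot y(v_{s+2}))$ one row above $v_{s+2}$ lies below the chord $\overline{v_1v_{s+t+2}}$, hence inside $Q$. The only inputs are $y(v')\le y(v_1)/\alpha^{s}$ and $x(v')\le x(v_{s+t+2})/\alpha^{t}$ (each non-vertical, non-horizontal edge advances at least one column and one row), and the chord condition reduces, after the monotonicity reductions in $m$ and $n$, to $\alpha^{-s}+\alpha^{-t}\le 1$, i.e.\ $\alpha^{t}\ge 1+1/\alpha$. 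The ``$+1$'' in $2r'+1$ thus comes from the deliberate asymmetry $s=t+1$ in a single unified count, not from a steep/flat dichotomy; whether your stronger claim ``at most $r'$ steep edges'' is even true is left entirely open by your argument, and it is not needed.
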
 
\begin{proof}
Let $t = \lceil\log_\alpha\left(\frac{\alpha+1}{\alpha}\right)\rceil$ and $s=t+1$ for $\alpha \in (1,2)$ and $t=1=s$ for $\alpha \geq 2$.
Suppose for contradiction that there are $s+t+1$ edges of $P$ of type III.
Let $v_1,\dots,v_{s+t+2}$ be the vertices of the convex chain that is formed by edges of $P$ of type III.
We use $Q$ to denote the convex polygon with vertices $v_1,\dots,v_{s+t+2}$.
Note that $Q$ is empty in $L(\alpha)$ as $P$ is empty and $Q\subseteq P$.

Let $v'$ be the point $(x(v_{s+2}),\alpha \cdot y(v_{s+2}))$, that is, $v'$ is the point of $L(\alpha)$ that lies just above $v_{s+2}$; see part~(a) of Figure~\ref{fig-typeIII}.
We will show that the point $v'$ lies below the line $\overline{v_1v_{s+t+2}}$.
Since $v'$ lies in the same column of $L(\alpha)$ as $v_{s+2}$, this then implies that $v'$ lies in the interior of $Q$, contradicting the fact that $Q$ is empty in $L(\alpha)$.

\begin{figure}[ht]
    \centering
    \includegraphics{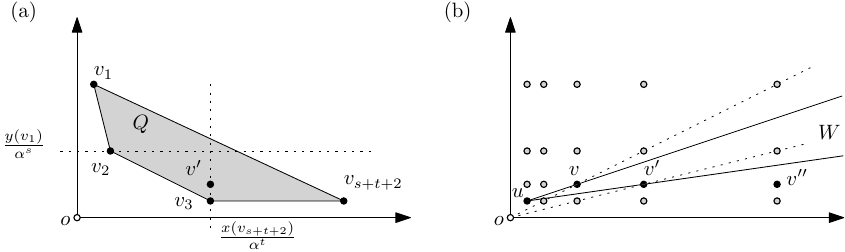}
    \caption{(a) An illustration of the proof of Lemma~\ref{lem-typeIII} for $s=1=t$. (b) An illustration of Lemma~\ref{lem-wedge}.}
    \label{fig-typeIII}
\end{figure}

Note that $x(v') \leq \frac{x(v_{s+t+2})}{\alpha^t}$ and $y(v') \leq \frac{y(v_1)}{\alpha^s}$ as all edges $v_iv_{i+1}$ are of type III and thus the $x$- and $y$-coordinates decrease by a multiplicative factor at least $\alpha$ for each such edge.
Since the only vertical edge might be $v_1v_2$ and the only horizontal edge might be $v_{s+t+1}v_{s+t+2}$, the $x$- or $y$-coordinates indeed decrease by the  factor at least $\alpha$ at each step.

Let $v_1=(\alpha^k,\alpha^\ell)$ and $v_{s+t+2}=(\alpha^{k+m},\alpha^{\ell-n})$ for some positive integers $k,\ell,m,n$.
Note that $m,n \geq s+t$.
The line determined by $v_1$ and $v_{s+t+2}$ is then
\[\{(x,y) \in \mathbb{R}^2 \colon (\alpha^\ell - \alpha^{\ell-n})x + (\alpha^{k+m} - \alpha^k)y = \alpha^{k+\ell+m} - \alpha^{k+\ell-n}\}.\]
Since $x(v') \leq \frac{x(v_{s+t+2})}{\alpha^t}$ and $y(v') \leq \frac{y(v_1)}{\alpha^s}$, it suffices to check
\[(\alpha^\ell - \alpha^{\ell-n})\frac{\alpha^{k+m}}{\alpha^t} + (\alpha^{k+m} - \alpha^k)\frac{\alpha^\ell}{\alpha^s} < \alpha^{k+\ell+m} - \alpha^{k+\ell-n}.\]
After dividing by $\alpha^{k+\ell +m}$, this can be rewritten as 
\[\alpha^{-t} + \alpha^{-s} < 1 - \alpha^{-m-n} + \alpha^{-t-n} + \alpha^{-s-m}.\]
Since $m,n \geq s+t$, the right hand side is decreasing with increasing $m$ and $n$ and thus we only need to prove
\[\alpha^{-s} + \alpha^{-t} \leq 1.\]
If $\alpha \geq 2$, then $s=1=t$ and this inequality becomes $2/\alpha \leq 1$, which is true.
If $\alpha \in (1,2)$, then $s=t+1$ and the inequality becomes $1+1/\alpha \leq \alpha^t$, which is also true by our choice of~$t$.
\end{proof}

It remains to bound the number of edges of $P$ that are  of types II and IV.
Observe that if we switch the $x$- and $y$- coordinates of $P$, then edges of type II become edges of type IV and vice versa.
Since the exponential lattice $L(\alpha)$ is symmetric with respect to the line $x=y$, we see that  it suffices to estimate the number of edges of type II.
To do so, we use the following auxiliary result.

\begin{lemma}
\label{lem-wedge}
Let $u$ be a point of $L(\alpha)$ and let $v$ and $v'$ be two points of $L(\alpha)$ that are consecutive in a row $R$ of $L(\alpha)$ that lies above the row containing $u$; see part~(b) of Figure~\ref{fig-typeIII}.
If $v$ and $v'$ lie to the right of $u$, then all points of $L(\alpha)$ that lie above $R$ in the interior of the angle $W$ spanned by the rays $\overline{uv}$ and $\overline{uv'}$ lie on at most $\left\lceil \log_\alpha(\frac{\alpha}{\alpha-1})\right\rceil$ lines containing the origin. 
\end{lemma}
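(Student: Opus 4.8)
The plan is to set up coordinates so that $u$ is the apex of the wedge, and to track how the two bounding rays $\overline{uv}$ and $\overline{uv'}$ separate the columns of $L(\alpha)$ that lie above the row $R$. Write $u = (\alpha^a, \alpha^b)$, and let the row $R$ be the $c$th row with $c > b$, so that $v$ and $v'$ are consecutive points of $R$, say $v = (\alpha^j, \alpha^c)$ and $v' = (\alpha^{j+1}, \alpha^c)$ with $v$ to the left of $v'$ (both to one side of $u$, since the wedge points upward). The key observation is that a point $p = (\alpha^i, \alpha^k)$ of $L(\alpha)$ lies on a line through the origin determined only by the ratio $y(p)/x(p) = \alpha^{k-i}$; so ``lying on at most $N$ lines through the origin'' means the difference $k - i$ takes at most $N$ distinct values among the relevant points. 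Thus it suffices to bound the number of distinct values of $k-i$ over all lattice points $(\alpha^i,\alpha^k)$ that lie above $R$ inside the open wedge.

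The main step is to show that for each column index $i$ (with $\alpha^i$ in the horizontal range cut out by the wedge above $R$), the set of row indices $k$ for which $(\alpha^i,\alpha^k)$ lies strictly between the two rays is an interval of integers whose length is controlled, and moreover that as $i$ increases by $1$ the admissible interval of $k$ shifts upward by exactly $1$ in a suitable sense — so that $k-i$ ranges over a fixed-size window independent of $i$. Concretely, the ray $\overline{uv'}$ (the right/upper bounding line) has the larger slope, and the ray $\overline{uv}$ the smaller; at horizontal coordinate $\alpha^i$ the wedge occupies the $y$-interval between the two ray heights, and one computes that the ratio of these two heights is at most $\frac{\alpha}{\alpha-1}$ times something — this is where the consecutiveness of $v,v'$ in $R$ (gap exactly $\alpha^{j+1}-\alpha^j = \alpha^j(\alpha-1)$) enters. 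Taking $\log_\alpha$ of that ratio bound and rounding up gives that at most $\left\lceil \log_\alpha\!\left(\frac{\alpha}{\alpha-1}\right)\right\rceil$ consecutive rows of $L(\alpha)$ meet each column inside the wedge above $R$, and the shift-by-one behavior of the window in $i$ then collapses this to the same bound on the number of distinct values of $k-i$, i.e.\ on the number of origin-lines. This last reduction — arguing that $k-i$ lies in a window of the stated width uniformly over all admissible $i$, rather than merely that each individual column contributes few points — is the part that needs care, and I expect it to be the main obstacle; it should follow from a monotone comparison of the two ray equations after dividing through by $\alpha^i$, exactly in the style of the calculations in Lemmas~\ref{lem-typeI} and~\ref{lem-typeIII}, reducing (as there) to a clean inequality in $\alpha$ that holds by the choice of the ceiling.

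Let me sketch the inequality more explicitly. The ray from $u=(\alpha^a,\alpha^b)$ through $v=(\alpha^j,\alpha^c)$ meets the vertical line $x=\alpha^i$ at height
\[
\alpha^b + (\alpha^c-\alpha^b)\cdot\frac{\alpha^i-\alpha^a}{\alpha^j-\alpha^a},
\]
and similarly for $v'$ with $\alpha^{j+1}$ in place of $\alpha^j$. For $i$ large (the regime governing how far the wedge reaches, hence the number of origin-lines), both heights are dominated by their leading terms, so the ratio of the upper to the lower height tends to
\[
\frac{\alpha^j-\alpha^a}{\alpha^{j+1}-\alpha^a}\cdot\frac{\alpha^{j+1}}{\alpha^j} \;\xrightarrow[\,j\to\infty\,]{}\; \frac{\alpha}{\alpha-1}\cdot\frac{1}{1}\Big/\text{(correction)},
\]
and a short monotonicity argument shows the ratio is at most $\frac{\alpha}{\alpha-1}$ throughout the relevant range. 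Hence between the two rays, above any fixed column, there are at most $\lceil\log_\alpha(\frac{\alpha}{\alpha-1})\rceil$ rows of $L(\alpha)$; and since translating the column by one factor of $\alpha$ translates this band of admissible heights by one factor of $\alpha$ as well (both ray-height formulas scale compatibly), the quantity $k-i$ is confined to an interval of that same length, which is precisely the claimed bound on the number of lines through the origin.
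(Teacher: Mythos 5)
Your reduction of the statement to counting the distinct values of $k-i$ over lattice points $(\alpha^i,\alpha^k)$ in the wedge is the right first step and agrees with the paper's viewpoint, but the mechanism you use to bound that count does not work. Your central quantitative claim --- that at a fixed column $x=\alpha^i$ the ratio of the heights of the two rays $\overline{uv}$ and $\overline{uv'}$ is at most $\frac{\alpha}{\alpha-1}$ --- is false. Normalizing $u=(1,1)$ and writing $v=(\alpha^j,\alpha^c)$, $v'=(\alpha^{j+1},\alpha^c)$, the ratio of the two heights at $x=\alpha^i$ increases with $i$ to $\frac{\alpha^{j+1}-1}{\alpha^j-1}=\alpha+\frac{\alpha-1}{\alpha^j-1}$, which is strictly greater than $\alpha$, hence strictly greater than $\frac{\alpha}{\alpha-1}$ for every $\alpha\geq 2$ (and already for $j=1$ it equals $\alpha+1$, which exceeds $\frac{\alpha}{\alpha-1}$ for all $\alpha>\frac{1+\sqrt{5}}{2}$). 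For $\alpha=2$ and $j=1$ the ratio tends to $3$, so the band between the two ray heights has logarithmic length $\log_2 3>1$ and your argument cannot exclude two lattice points in a single column, whereas the lemma requires at most $\lceil\log_2 2\rceil=1$ origin-line in total. The factor $\frac{\alpha}{\alpha-1}$ does not come from the vertical extent of the wedge over a column; in the paper it comes from a slope comparison between lines through the origin and the rays: one shows that the origin-line through $v$ (and through any point of $R$ left of $v$) stays above $\overline{uv}$ beyond $R$, while the origin-line through the point $r=\lceil\log_\alpha(\frac{\alpha}{\alpha-1})\rceil$ steps to the right of $v'$ on $R$ stays below $\overline{uv'}$ beyond $R$, so the wedge above $R$ meets only the $r$ origin-lines in between.

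The second, independent gap is the step you yourself flag as the main obstacle: passing from a per-column count to a bound on the number of distinct values of $k-i$. The admissible window of $k-i$ in column $i$ is an interval $(\ell_i,u_i)$ whose two endpoints both increase with $i$ toward finite limits, but at different rates near the apex, so these windows are not translates of one another and their union over $i$ can contain more integers than any single window. You give no argument for the claimed collapse, and without one the proof is incomplete even in the range $\alpha\leq\frac{1+\sqrt{5}}{2}$ where the per-column ratio bound happens to hold. The paper's proof avoids this issue entirely by bounding the union directly through the origin-line comparison described above, which is the idea your proposal is missing.
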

\begin{proof}
Similarly as in Lemma~\ref{lem-typeI}, we set $r = \left\lceil\log_{\alpha}\left(\frac{\alpha}{\alpha-1}\right)\right\rceil$ and note that $r \geq 1$.
We can assume without loss of generality that $u=(1,1)$ as otherwise it suffices to scale the points of $L(\alpha)$ with an affine transformation.
Since $v$ and $v'$ are consecutive on~$R$, they both lie in the same closed halfplane determined by the line $x=y$. 
We first assume that the point $v$ lies below or on the line $x=y$.

Let $o$ be the origin and consider the lines $\overline{ov}$ and $\overline{ov'}$.
Then, the part of the line $\overline{ov}$ above the row $R$ is (not necessarily strictly) above $\overline{uv}$; see part~(b) of Figure~\ref{fig-typeIII}.
Similarly, the part of the line $\overline{ov'}$ above $R$ is above $\overline{uv'}$.
It follows that only points of $L(\alpha)$ that lie on a line $\overline{ow}$, where $w$ is a point of $L(\alpha)$ to the right of $v$ on~$R$, can lie in the interior of $W$.

Let $v''$ be the point $(\alpha^r \cdot x(v'),y(v'))$, that is, $v''$ is the point of $L(\alpha)$ that lies  $r$ columns to the right of $v'$ on $R$,
We will show that the part of the line $\overline{ov''}$ above~$R$ lies below $\overline{uv'}$.
This will conclude the proof as all points of $L(\alpha)$ that lie in the interior of $W$ above $R$ have to then lie on one of the $r$ lines $\overline{ow}$ with $w$ lying between $v$ and $v''$ on $R$.

It suffices to compare the slopes of the lines $\overline{ov''}$ and $\overline{uv'}$.
Let $v' = (\alpha^m,\alpha^n)$ for some positive integers $m$ and $n$.
Then, the slope of $\overline{ov''}$ is 
\[\frac{y(v'')-y(o)}{x(v'')-x(o)} = \frac{y(v')}{\alpha^r \cdot x(v')} = \frac{\alpha^n}{\alpha^{m+r}}\]
and the slope of $\overline{uv'}$ equals
\[\frac{y(v')-y(u)}{x(v')-x(u)} = \frac{y(v') - 1}{x(v')-1} = \frac{\alpha^n-1}{\alpha^m-1}.\]
Thus, we want
\[\frac{\alpha^n - 1}{\alpha^m-1} \geq \frac{\alpha^n}{\alpha^{m+r}}.\]
We can rewrite this inequality as
\[\alpha^{m+n+r} - \alpha^{m+r} \geq  \alpha^{n+m}-\alpha^n,\]
which can be further rewritten by dividing both sides with $\alpha^n$ as
\[\alpha^{m+r}(1-\alpha^{-n}) \geq  \alpha^m - 1.\]
The left side is increasing with increasing $n$, so we can assume $n=1$ and, by dividing both sides with $\alpha^m$, we obtain
\[\alpha^r(1-\alpha^{-1}) \geq 1-\alpha^{-m}.\]
Now, the term $\alpha^{-m}$ on the right side approaches $0$ from above with increasing $m$, so we can replace it by $0$ obtaining
\[\alpha^r-\alpha^{r-1} \geq 1.\]
This inequality is satisfied by our choice of $r$.

Now, assume that the point $v$ lies above the line $x=y$.
Then, the proof proceeds analogously as in the previous case.
The part of the line $\overline{ov}$ above the row $R$ is (not necessarily strictly) below $\overline{uv}$.
Similarly, the part of the line $\overline{ov'}$ above $R$ is below $\overline{uv'}$.
Then, only points of $L(\alpha)$ that lie on a line $\overline{ow}$, where $w$ is a point of $L(\alpha)$ to the left of $v$ on~$R$, can lie in the interior of $W$ above $R$.
Considering the point $(\alpha^{-r} \cdot x(v'),y(v'))$ of $L(\alpha)$ that lies  $r$ columns to the left of $v'$ on $R$, we can show with analogous computations as before that the part of the line $\overline{ov''}$ above~$R$ lies above $\overline{uv'}$.
This concludes the proof.
\end{proof}

Now, we can apply Lemma~\ref{lem-wedge} to obtain an upper bound on the number of edges of $P$ of type II.

\begin{lemma}
\label{lem-typeII}
The polygon $P$ has at most $\left \lceil\log_{\alpha}\left(\frac{\alpha}{\alpha-1}\right)\right\rceil+1$ edges of type II. 
\end{lemma}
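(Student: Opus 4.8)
The plan is to split the type II chain into its edges of strictly positive slope and the at most one horizontal edge. Since slopes along a convex chain are monotone and a horizontal type II edge has $P$ below it (hence is the shallowest), that horizontal edge must sit at the top end of the chain, and the positive-slope type II edges form a convex subchain emanating from the leftmost vertex $L$ of $P$ and running up and to the right. It therefore suffices to bound the number of edges of this subchain by $r := \lceil \log_\alpha(\frac{\alpha}{\alpha-1})\rceil$; adding one for the possible horizontal edge then gives the claimed bound $r+1$.

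To control the positive-slope subchain I would apply Lemma~\ref{lem-wedge} with $u := L$. Let $R$ be the row of $L(\alpha)$ immediately above the one through $L$; the subchain already reaches height $R$ on its very first edge (the one incident to $L$), so let $c \in R$ be the point where that first edge meets $R$, and let $v$, $v'$ be the points of $R$ immediately to the left and to the right of $c$. The geometric core is to check that the part of $P$ lying weakly above $R$ — in particular every internal vertex of the type II chain above $R$ — is contained in the wedge $W$ spanned at $L$ by $\overline{Lv}$ and $\overline{Lv'}$. On one side, the first edge passes through $L$ and $c$, so its slope equals that of $\overline{Lc}$, which is smaller than that of $\overline{Lv}$; by convexity every later edge is even shallower, so the subchain (which is the upper-left boundary of $P$ above $R$), and hence all of $P$ above $R$ via the tangent cone at $L$, stays on the $v'$-side of $\overline{Lv}$. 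On the other side, $v'$ lies to the right of $c$, so it would be strictly interior to $P$ at height $R$ unless $P$ does not extend that far; since $P$ is empty this forces $P$ above $R$ onto the $v$-side of $\overline{Lv'}$, because a point of $P$ above $R$ on the far side of $\overline{Lv'}$ would, together with $L$ and $c$, enclose $v'$ in the interior of $P$. Lemma~\ref{lem-wedge} then shows that the internal vertices of the type II chain lying above $R$ occupy at most $r$ lines through the origin.

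The remaining step, which I expect to be the main obstacle, is to upgrade "at most $r$ lines through the origin" to "at most $r$ vertices", i.e.\ to rule out two vertices of the chain lying on one line through the origin. If $w, w'$ are chain vertices on the line $\overline{ow}$ with $w' = \alpha^{j} w$, $j \ge 1$ (so $w'$ is farther from $o$), then $j \ge 2$ puts the lattice point $\alpha w$ strictly between them, hence in the interior of $P$ when $w, w'$ are non-consecutive on the chain and on the edge $ww'$ when they are consecutive — either way contradicting emptiness — and $j = 1$ is impossible for non-consecutive $w, w'$ since no power of $\alpha$ lies strictly between two consecutive ones. The delicate case left is $j=1$ with $w, w'$ consecutive, i.e.\ a ``radial'' edge $w \mapsto \alpha w$ lying on a line through $o$; the plan is to exclude it using that $P$ is empty, that $o$ lies on the supporting line of such an edge, and the concavity of the chain (the adjacent edges being strictly steeper before $w$ and strictly shallower after $\alpha w$), combined with the confinement to the thin wedge $W$, so as to force a lattice point of $L(\alpha)$ into the interior of $P$. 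Making this exclusion airtight, and likewise making the confinement of $P$ above $R$ to $W$ fully rigorous (including the case in which $P$ has a vertical left edge, where one takes $L$ to be its top endpoint), is where the real work lies; with those in hand one obtains at most $r$ internal vertices and hence at most $r+1$ edges of type II.
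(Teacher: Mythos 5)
Your outline matches the paper's proof in its essentials: the paper also bounds the type~II chain by applying Lemma~\ref{lem-wedge} at the leftmost chain vertex and then arguing that each of the resulting $\left\lceil\log_\alpha\left(\frac{\alpha}{\alpha-1}\right)\right\rceil$ lines through the origin carries at most one chain vertex. However, you have not actually completed the proof: you explicitly leave open the case of a ``radial'' edge from $w$ to $\alpha w$ lying on a line through the origin, and this is precisely the step that needs an argument rather than a plan. Your instinct that it is the delicate point is correct --- radial type~II edges on empty polygons do exist in general (for $\alpha=2$, the triangle $\mathrm{conv}\{(1,2),(2,4),(2,2)\}$ is empty in $L(2)$ and its edge from $(1,2)$ to $(2,4)$ is a type~II edge on the line $y=2x$), so no purely local argument about the edge and its neighbours can exclude it; one must use the position of the chain relative to the row $R$. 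Here is a way to close it that also handles all the collinear cases at once: suppose two chain vertices $w_1,w_2$ above $R$ lie on a line $\ell$ through the origin; by Lemma~\ref{lem-wedge}, $\ell$ meets $R$ in a lattice point $w_R$ with $x(w_R)>x(v)\geq x(c)$ wait, rather $x(w_R)\geq x(v')>x(c)$. Since the subchain from $w_1$ to $w_2$ lies weakly above the chord $[w_1,w_2]\subseteq\ell$, the edge leaving $w_1$ has slope at least the slope $\sigma$ of $\ell$, hence every earlier edge of the chain has slope strictly greater than $\sigma$. Tracing the chain backwards from $w_1$ and monitoring $f(p)=y(p)-\sigma x(p)$, each step strictly decreases $f$, so $f(c)<f(w_1)=0$, i.e.\ the crossing point $c$ lies strictly below $\ell$. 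But $c$ lies on $R$ strictly to the left of $w_R=\ell\cap R$, hence strictly above $\ell$ --- a contradiction. (The paper runs this with $v:=$ the second chain vertex in place of $c$, which is why its one-sentence justification suffices there.)

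Two further points need attention in your setup. First, your choice of $v,v'$ as the lattice neighbours of $c$ breaks down when $c$ is itself a lattice point of $R$: then $v$ and $v'$ are not consecutive on $R$, so Lemma~\ref{lem-wedge} does not apply as stated; in that case $c$ must be the second vertex of the chain (otherwise it is a non-vertex lattice point on an edge, contradicting emptiness), and you should instead take the wedge spanned by $\overline{Lc}$ and $\overline{L(\alpha c)}$ --- which is exactly the paper's choice. Second, your bookkeeping ``at most $r$ edges in the positive-slope subchain, plus one horizontal'' can overcount: if $c$ is a vertex, the positive-slope subchain already has up to $r+2$ vertices ($L$, $c$, and one per line above $R$), i.e.\ $r+1$ edges, and naively adding a horizontal edge would give $r+2$. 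The count is saved only because both endpoints of a horizontal type~II edge lie above $R$ in the wedge and on distinct lines through the origin, so they are already among the $\leq r$ counted vertices; this should be said explicitly.
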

\begin{proof}
Again, let $r = \left\lceil\log_{\alpha}\left(\frac{\alpha}{\alpha-1}\right)\right\rceil$.
Let $u$ be the leftmost vertex of the convex chain $C$ determined by the edges of $P$ of type II.
Similarly, let $v$ be the second leftmost vertex of~$C$.
Note that since the edge $uv$ is of type II, the vertex $v$ lies in a row $R$ of $L(\alpha)$ above the row containing $u$ and $v$ is also to the right of $u$.
Let $v'$ be the point $(\alpha \cdot  x(v),y(v))$, that is, the point of  $L(\alpha)$ that is to the right of $v$ on~$R$.
Then, by Lemma~\ref{lem-wedge}, all points of $L(\alpha)$ that lie above $R$ and in the interior of the angle $W$ spanned by the rays $\overline{uv}$ and $\overline{uv'}$ lie on at most $r$ lines containing the origin.

Since $P$ is empty in $L(\alpha)$, all vertices of $C$ besides $u$, $v$, and possibly $v'$ lie in $W$ above $R$.
Since all edges of $C$ are of type II, every line determined by the origin and by a point of $L(\alpha)$ from the interior of $W$ contains at most one vertex of $C$.
Note that if $v'$ is a vertex of $C$, then the only vertices of $C$ are $u,v,v'$.
Thus, in total $C$ has at most $r+2$ vertices and therefore at most $r+1$ edges.
\end{proof}

We recall that, by symmetry, the same bound applies for edges of type IV and thus we get the following result.

\begin{corollary}
\label{cor-typeIV}
The polygon $P$ has at most $\left\lceil\log_{\alpha}\left(\frac{\alpha}{\alpha-1}\right)\right\rceil+1$ edges of type IV.\qed
\end{corollary}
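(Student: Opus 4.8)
The plan is to derive Corollary~\ref{cor-typeIV} from Lemma~\ref{lem-typeII} by exploiting the reflection symmetry of $L(\alpha)$. Let $\sigma\colon\mathbb{R}^2\to\mathbb{R}^2$ be the coordinate swap $\sigma(x,y)=(y,x)$. Since $L(\alpha)=\{\alpha^n\colon n\in\mathbb{N}_0\}^2$ is invariant under $\sigma$, the image $P'=\sigma(P)$ is a convex polygon with all vertices in $L(\alpha)$, and $P'$ is empty in $L(\alpha)$ because $\sigma$ restricts to a bijection of $L(\alpha)$ and $P$ is empty. Hence Lemma~\ref{lem-typeII} applies to $P'$.

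The second step is to confirm the assertion already recorded in the text preceding the statement: $\sigma$ carries the edges of type~IV of $P$ bijectively onto the edges of type~II of $P'$. A non-degenerate edge of positive slope $s$ is sent to one of positive slope $1/s$, and reflection across the line $x=y$ reverses the orientation of the plane, so ``$P$ lies to the left of $\overline e$'' becomes ``$P'$ lies to the right of $\overline{\sigma(e)}$''; by the definitions of types~II and~IV this is exactly the claimed interchange. For the axis-parallel edges one checks the matching directly from the conventions fixed in the definition: a horizontal edge with $P$ below it (type~II) maps to a vertical edge with $P'$ to its left (type~IV), and symmetrically. This bookkeeping of the degenerate cases is the only point that needs care.

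With these two steps in hand the conclusion is immediate: by Lemma~\ref{lem-typeII} the empty polygon $P'$ has at most $\lceil\log_\alpha(\alpha/(\alpha-1))\rceil+1$ edges of type~II, and each such edge is the $\sigma$-image of an edge of type~IV of $P$, so $P$ has at most $\lceil\log_\alpha(\alpha/(\alpha-1))\rceil+1$ edges of type~IV. I do not expect any genuine obstacle here: the whole argument is a symmetry reduction, and the only thing to be slightly careful about is verifying that the four edge types are set up so that the reflection across $x=y$ really swaps types~II and~IV (and not, say, types~I and~III).
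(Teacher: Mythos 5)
Your proposal is correct and is exactly the argument the paper intends: the corollary is deduced from Lemma~\ref{lem-typeII} by the reflection $(x,y)\mapsto(y,x)$, which preserves $L(\alpha)$ and interchanges edges of types II and IV (as the paper notes in the sentence preceding the corollary). Your extra bookkeeping of the degenerate horizontal/vertical cases is consistent with the paper's conventions and only makes explicit what the paper leaves to the reader.
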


Since each edge of $P$ is of one of the types I--IV, it immediately follows from Lemmas~\ref{lem-typeI},~\ref{lem-typeIII},~\ref{lem-typeII}, and from Corollary~\ref{cor-typeIV} that the number of edges of $P$ is at most 
\[3\left\lceil \log_{\alpha}\left(\frac{\alpha}{\alpha-1}\right) \right\rceil + 2 + 2\left\lceil \log_{\alpha}\left(\frac{\alpha+1}{\alpha}\right) \right\rceil +1 \leq 5\left\lceil \log_{\alpha}\left(\frac{\alpha}{\alpha-1}\right) \right\rceil+3,\]
as $\log_x\left(\frac{x}{x-1}\right) \geq \log_{x}\left(\frac{x+1}{x}\right)$ for every $x >1$.
In particular, this gives $h(2) \leq 8$ and $h\left(\frac{1+\sqrt{5}}{2}\right) \leq 13$.
To obtain better bounds that are tight for $\alpha \geq \frac{1+\sqrt{5}}{2}$, we observe that not all types can appear simultaneously.
To show this, we will use one last auxiliary result.

Let $p$ and $q$ be points lying on the same row $R$ of~$L(\alpha)$, each contained in an edge of $P$.
We note that $p$ and $q$ do not need to be distinct and that they can also be interior points of an edge of $P$.
Let $L$ and $L'$ be two lines containing $p$ and $q$, respectively.
If the slopes of $L$ and $L'$ are negative, 
then we call the part of the plane between $L$ and $L'$ below $R$ a \emph{slice of negative slope}; see part~(a) of Figure~\ref{fig-slice}
Analogously, \emph{a slice of positive slope} is the part of the plane between $L$ and $L'$ above $R$ if $L$ and $L'$ 
have positive slope. 

\begin{figure}[ht]
    \centering
    \includegraphics{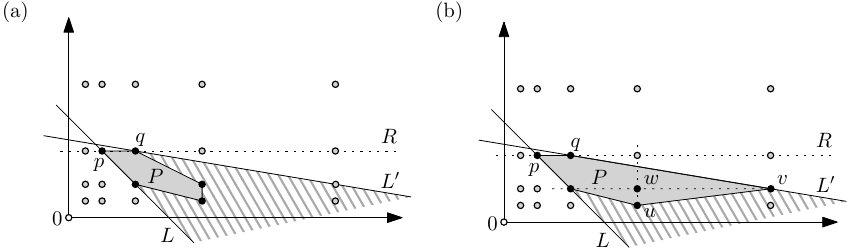}
    \caption{(a) An example of a slice of negative slope. The slice is denoted by dark gray stripes. (b) An illustration of the proof of Lemma~\ref{lem-forbidden} for negative slopes.}
    \label{fig-slice}
\end{figure}

\begin{lemma}
\label{lem-forbidden}
If the empty polygon $P$ is contained in a slice of negative slope, then there is no non-vertical edge of $P$ of type IV.
Similarly, if $P$ is contained in a slice of positive slope, then there is no edge of type I.
\end{lemma}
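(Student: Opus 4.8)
The plan is to reduce the second assertion to the first by a reflection, and then prove the first by contradiction, producing a point of $L(\alpha)$ inside $P$ and contradicting that $P$ is empty.

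\emph{Symmetry reduction.} Apply the affine reflection $\sigma\colon(x,y)\mapsto(-x,y)$. It sends $L(\alpha)$ to an affinely equivalent set, so it changes neither emptiness nor the quantity $h$; it maps rows to rows; it preserves verticality of edges; it interchanges positive and negative slopes; and, inspecting the definitions of the four edge types, it interchanges type~I with type~IV while fixing types~II and~III. Hence $\sigma$ carries a positive-slope slice to a negative-slope slice and a non-vertical type~I edge to a non-vertical type~IV edge, so the second assertion is exactly the $\sigma$-image of the first. It therefore suffices to prove: if $P$ is contained in a negative-slope slice, then $P$ has no non-vertical edge of type~IV.

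\emph{The contradiction setup.} Suppose $e=uv$ is a non-vertical type~IV edge, so $e$ has positive slope, $x(u)<x(v)$, $y(u)<y(v)$, and $P$ lies weakly on the upper side of $\overline{e}$. Let $R$, $p$, $q$, $L$, $L'$ be the data witnessing that $P$ lies in a negative-slope slice: $L,L'$ have negative slope, $p,q$ are points of the boundary of $P$ lying on the row $R$, and $P$ is contained in the region between $L$ and $L'$ below $R$; in particular $P$ lies below $R$. I would show that the point $w=(x(u),\alpha\cdot y(u))$ of $L(\alpha)$, the lattice point immediately above $u$, lies in the interior of $P$, which contradicts that $P$ is empty. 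Two checks are immediate. First, since $y(u)$ and $y(v)$ are powers of $\alpha$ with $y(u)<y(v)$, we have $\alpha\cdot y(u)\le y(v)\le y(R)$, so $w$ lies strictly below $R$ at a height that $e$ itself attains. Second, both edges of $P$ incident to $u$ have $P$ on their upper side: one is $e$, and the other is of type~IV unless $u$ is the bottommost vertex of $P$, in which case it is of type~III, but in either case the ``$P$-side'' of that edge is its upper side; since $w$ lies vertically above $u$, which lies on both of these lines, $w$ is strictly on the $P$-side of both constraints of $P$ active at $u$.

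\emph{The crux.} What remains — and what I expect to be the technical heart — is to verify that $w$ does not leave $P$ through the rest of the boundary, i.e.\ through the upper-right chain (type~I), or, when the column of $u$ lies to the left of the topmost vertex of $P$, through the upper-left chain (type~II). This is precisely where the negative-slope hypothesis is used: because $P$ is pinched between the negative-slope lines $L$ and $L'$ below $R$ with $p,q$ on the topmost row of $P$, the bounding line of the slice lying to the right of $P$ has negative slope and passes above $v$, which forces the type~I chain of $P$ over the column of $u$ to stay above height $\alpha\cdot y(u)$; then $w$ remains inside. A few degenerate positions must be handled separately, namely when $u$ or $v$ coincides with an extreme vertex of $P$ (the bottommost or rightmost vertex), or when a vertical leftmost edge of $P$ is incident to $u$; in those cases I would instead take the symmetric candidate point — the lattice point immediately to the left of $u$, or immediately above $v$ — and run the same two-step check, again using the appropriate bounding line of the slice for the containment control. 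Apart from this case analysis, every step is a one-line slope comparison of the kind already carried out in the proofs of Lemmas~\ref{lem-typeI}--\ref{lem-typeII}.
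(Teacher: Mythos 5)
Your overall strategy---reduce one half of the statement to the other by a reflection, then exhibit a non-vertex point of $L(\alpha)$ inside $P$---is the same as the paper's, but there are two genuine problems. First, the symmetry reduction rests on a false claim: the reflection $\sigma\colon(x,y)\mapsto(-x,y)$ does \emph{not} interchange types I and IV. Unwinding the definitions, type I edges form the upper-right chain of $P$ (negative slope, $P$ on the lower-left side), type II the upper-left chain, type III the lower-left chain and type IV the lower-right chain; a left--right mirror therefore swaps I with II and III with IV. Under your $\sigma$ the second assertion becomes a statement about type II edges, not the first assertion, so the reduction as written does not go through. It can be repaired by reflecting across a horizontal line, $(x,y)\mapsto(x,-y)$, which does swap I with IV and carries positive-slope slices to negative-slope ones.

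Second, and more seriously, the actual content of the lemma---that your candidate point lies in $P$---is not proved: you explicitly defer it as ``the crux,'' assert without justification that the slice forces the upper boundary of $P$ over the column of $u$ to stay above height $\alpha\cdot y(u)$, and leave several degenerate configurations (a vertical edge atop $u$, extreme vertices) to a case analysis you do not carry out. Your choice $w=(x(u),\alpha\cdot y(u))$ is what creates these difficulties: for instance, if a vertical edge of $P$ of height one row sits on top of $u$, your $w$ is a vertex of $P$ and yields no contradiction. The paper instead takes $w=(x(u),y(v))$, and then the argument closes immediately: since $u$ lies strictly below the row $R$ and the bounding lines of the slice have negative slope, one of the boundary points $p,q$ of $P$ on $R$ lies strictly to the left of $u$; convexity (segments from that point to $u$ and to $v$) then produces points of $P$ on the row of $v$ strictly to the left of $x(u)$ and on the column of $u$ at height at least $y(v)$, which together with $u$ and $v$ trap $w$ in $P$ as a non-vertex point, with no case analysis. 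I recommend adopting that choice of $w$ and fixing the reflection.
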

\begin{proof}
It suffices to prove the statement for slices of negative slope as the proof of the statement for the positive slope is analogous.
Suppose for contradiction that there is a non-vertical edge $uv$ of type IV in a slice of negative slope determined by lines $L$ and $L'$ and points $p$ and $q$ as in the definition of a slice.
Without loss of generality, we assume $x(u)<x(v)$.

Consider the point $w=(x(u),y(v))$ of $L(\alpha)$.
Since $uv$ is non-vertical, we have $w \notin \{u,v\}$.
We claim that $w$ is in the interior of $P$, contradicting the assumption that $P$ is empty in $L(\alpha)$.
Since $uv$ is of type IV, the point $u$ lies below the row containing~$w$.
However, since $p$ is contained in an edge of $P$ and $P$ is in the slice, the boundary of $P$ intersects this row to the left of $w$.
Analogously, $v$ is  to the right of the column containing $w$ and thus the boundary of $P$ intersects this column above $w$.
Then, however, $w$ lies in the interior of $P$.
\end{proof}

Finally, we can now finish the proof of Theorem~\ref{thm-upperBound}.

\begin{proof}[Proof of Theorem~\ref{thm-upperBound}]
First, we observe that if all vertices of $P$ lie on two columns of $L(\alpha)$, then $P$ can have at most four vertices.
So we assume that this is not the case.
Let $u$ be the leftmost vertex of $P$ with the highest $y$-coordinate among all leftmost vertices of~$P$.
Let $e_1$ and $e_2$ be the edges of $P$ incident to $u$.
We denote the other edge of $P$ adjacent to $e_1$ as $e$ and the other edge of $P$ adjacent to $e_2$ as $e'$.
We also use $t_I$, $t_{II}$, $t_{III}$, and $t_{IV}$ to denote the number of edges of $P$ of type I, II, III, and IV, respectively.

First, assume that $e_1$ is vertical. 
If $e_2$ is horizontal, then, since $u$ is the top vertex of~$e_1$ and $P$ is not contained in two columns of $L(\alpha),$ the point $(\alpha \cdot x(u),y(u)/\alpha)$ of  $L(\alpha)$ lies in the interior of $P$, which is impossible as $P$ is empty in $L(\alpha)$.

\begin{figure}[ht]
    \centering
    \includegraphics{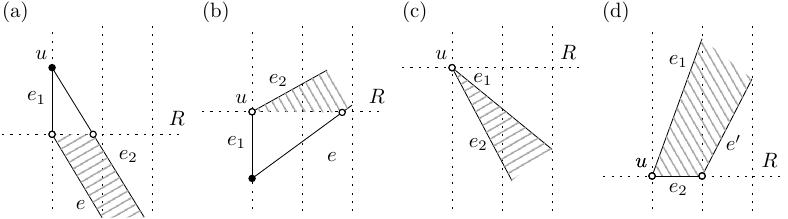}
    \caption{An illustration of the proof of Theorem~\ref{thm-upperBound}. A slice is denoted by grey strips and its points $p$ and $q$ by empty circles. For example, in part~(a), the slice is formed by the region between lines containing the edges $e$ and $e_2$.}
    \label{fig-altogether}
\end{figure}

If $e_1$ is vertical and the slope of $e_2$ is negative, then there is no edge of type~II.
Thus, the edge $e$ intersects the row $R$ of $L(\alpha)$ containing the other vertex of $e_1$ and $\overline{e}$ has negative slope.
Then, the part of $P$ below $R$ is contained in the slice of negative slope determined by $\overline{e_2}$ and $\overline{e}$; see part~(a) of Figure~\ref{fig-altogether}.
By Lemma~\ref{lem-forbidden}, there is no non-vertical edge of type IV in $P$.
By Lemmas~\ref{lem-typeI} and~\ref{lem-typeIII}, the total number of edges of $P$ is thus at most 
\[t_I + t_{III} + 1 \leq \left\lceil\log_\alpha\left(\frac{\alpha}{\alpha-1}\right)\right\rceil+2\left\lceil \log_{\alpha}\left(\frac{\alpha+1}{\alpha}\right) \right\rceil +2\]
for $\alpha \in (1,2)$.
Moreover, for $\alpha \geq 2$ this bound may be reduced by one.

If $e_1$ is vertical and the slope of $e_2$ is positive, then, since $P$ is empty, there is no edge of type III besides $e_1$ as otherwise the point $(\alpha \cdot x(u),y(u))$ of $L(\alpha)$ is in the interior of~$P$.
The edge $e$ intersects the row $R$ of $L(\alpha)$ containing $u$ and $\overline{e}$ has positive slope.
Thus, the part of $P$ above $R$ is contained in the slice of positive slope determined by $\overline{e_2}$ and $\overline{e}$; see part~(b) of Figure~\ref{fig-altogether}.
By Lemma~\ref{lem-forbidden}, there is no edge of type I in $P$.
By Lemma~\ref{lem-typeII} and Corollary~\ref{cor-typeIV}, the total number of edges of~$P$ is then at most 
\[t_{II} + 1 + t_{IV} \leq 2\left\lceil\log_\alpha\left(\frac{\alpha}{\alpha-1}\right)\right\rceil+3.\]

In the rest of the proof, we can now assume that none of the edges $e_1$ and $e_2$ is vertical.
We can label them so that the slope of~$e_1$ is larger than the slope of $e_2$.

First, assume that the slope of~$e_1$ is positive and the slope of $e_2$ is negative.
Then, since the vertices of $P$ do not lie on two columns of $L(\alpha)$, the point $(\alpha \cdot x(u),y(u))$ is contained in the interior of $P$, which is impossible as $P$ is empty in $L(\alpha)$.

If the slopes of $e_1$ and $e_2$ are both non-positive, then there is no edge of type II  besides the possibly horizontal edge $e_1$ as $u$ is the leftmost vertex of $P$.
By Lemma~\ref{lem-forbidden}, there is also no non-vertical edge of type IV as $P$ is contained in the slice of negative slopes determined by $\overline{e_1}$ and $\overline{e_2}$ or by $\overline{e}$ and $\overline{e_2}$ if $e_1$ is horizontal; see part~(c) of Figure~\ref{fig-altogether}.
Thus, by Lemmas~\ref{lem-typeI} and~\ref{lem-typeIII}, the number of edges of $P$ is at most 
\[t_I + 1 + t_{III} + 1 \leq \left\lceil\log_\alpha\left(\frac{\alpha}{\alpha-1}\right)\right\rceil+2\left\lceil \log_{\alpha}\left(\frac{\alpha+1}{\alpha}\right) \right\rceil +3\]
for $\alpha \in (1,2)$. 
Moreover, for $\alpha \geq 2$ this bound may be reduced by one.

If the slopes of $e_1$ and $e_2$ are both non-negative, then there is no edge of type III besides the possibly horizontal edge $e_2$ (note that a vertical edge of type III would have $u$ as its bottom vertex,  which is impossible by the choice of $u$).
Then, $P$ is contained in the slice of positive slope determined by $\overline{e_1}$ and $\overline{e_2}$ or, if $e_2$ is horizontal, by $\overline{e_1}$ and $\overline{e'}$; see part~(d) of Figure~\ref{fig-altogether}.
Lemma~\ref{lem-forbidden} then implies that there is also no edge of type I.
We thus have at most
\[t_{II} + 1  + t_{IV}  \leq 2\left\lceil\log_\alpha\left(\frac{\alpha}{\alpha-1}\right)\right\rceil+3\]
edges of $P$ by Lemma~\ref{lem-typeII} and Corollary~\ref{cor-typeIV}.

Altogether, the upper bound on the number of edges of $P$ is \[\max\left\{\left\lceil\log_\alpha\left(\frac{\alpha}{\alpha-1}\right)\right\rceil+2\left\lceil \log_{\alpha}\left(\frac{\alpha+1}{\alpha}\right) \right\rceil+3, \;\; 2\left\lceil\log_\alpha\left(\frac{\alpha}{\alpha-1}\right)\right\rceil+3\right\}\]
for $\alpha \in (1,2)$.
Moreover, the first term may be reduced by one for $\alpha \geq 2$.
This becomes $5$ for $\alpha \geq 2$, $h(\alpha) \leq 7$ for $\alpha \geq [\frac{1+\sqrt{5}}{2},2)$, and at most $3\left\lceil\log_\alpha\left(\frac{\alpha}{\alpha-1}\right)\right\rceil+3$ otherwise, since $\left\lceil \log_{\alpha}\left(\frac{\alpha+1}{\alpha}\right) \right\rceil \leq \left\lceil\log_\alpha\left(\frac{\alpha}{\alpha-1}\right)\right\rceil$ for every $\alpha \in (1,\frac{1+\sqrt{5}}{2})$.
\end{proof}

\section{Proof of Theorem~\ref{thm-lowerBound}}
\label{sec-lowerBound}

We prove the lower bounds on $h(\alpha)$ through the following three propositions.

\begin{proposition}
\label{prop-lowerBound-2}
For every $\alpha\geq 2$, we have $h(\alpha)\geq 5$.
\end{proposition}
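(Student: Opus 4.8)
The plan is to prove the bound by exhibiting a single explicit empty convex pentagon in $L(\alpha)$; since $h(\alpha)$ is the maximum number of vertices of a polygon empty in $L(\alpha)$, producing one empty $5$-gon immediately gives $h(\alpha)\ge 5$. The pentagon I would use has vertices
\[A=(1,\alpha^2),\quad B=(\alpha,\alpha),\quad C=(\alpha^2,1),\quad D=(\alpha^2,\alpha),\quad E=(\alpha,\alpha^2),\]
all of which lie in $L(\alpha)$ since they equal $(\alpha^0,\alpha^2),(\alpha^1,\alpha^1),(\alpha^2,\alpha^0),(\alpha^2,\alpha^1),(\alpha^1,\alpha^2)$. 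Informally, $A,B,C$ are the three points of $L(\alpha)$ on the ``antidiagonal'' $xy=\alpha^2$, and $D,E$ cap this chain off from above; note that the whole configuration is symmetric under the reflection $x\leftrightarrow y$, which swaps $A\leftrightarrow C$ and $D\leftrightarrow E$ and fixes $B$. The leftmost-and-topmost vertex is $A$, and the edges $\overline{AB}$ and $\overline{BC}$ have negative slope, so this is consistent with the extremal shape predicted by Theorem~\ref{thm-upperBound}.

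The first step is to check that $A,B,C,D,E$, taken in this cyclic order, form a strictly convex polygon. This is a routine sign computation: writing the five edge vectors $B-A,C-B,D-C,E-D,A-E$ and taking the cross product of each consecutive pair, one gets $(\alpha-1)^3(\alpha+1)$ once and $\alpha(\alpha-1)^2$ four times, all strictly positive for $\alpha>1$. Hence the boundary turns left at every vertex and no three of the vertices are collinear, so $P$ (the convex hull of $A,\dots,E$) is a genuine pentagon.

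The second step is to locate every point of $L(\alpha)$ that could possibly lie in $P$. Each vertex has both coordinates in $[1,\alpha^2]$, so $P\subseteq[1,\alpha^2]^2$, and the only points of $L(\alpha)$ in this box are the nine points $(\alpha^i,\alpha^j)$ with $i,j\in\{0,1,2\}$. Five of these are the vertices of $P$, so it remains to show that the four points $(1,1)$, $(\alpha,1)$, $(1,\alpha)$, $(\alpha^2,\alpha^2)$ all lie strictly outside $P$. The edge $\overline{AB}$ lies on the line $\alpha x+y=\alpha^2+\alpha$, and every vertex of $P$ satisfies $\alpha x+y\ge\alpha^2+\alpha$, so $P$ is contained in that closed half-plane; but $\alpha\cdot 1+1=\alpha+1<\alpha^2+\alpha$ and $\alpha\cdot\alpha+1=\alpha^2+1<\alpha^2+\alpha$, so $(1,1)$ and $(\alpha,1)$ are on the far side of $\overline{AB}$, hence outside $P$. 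By the $x\leftrightarrow y$ symmetry the point $(1,\alpha)$ is likewise cut off, now by $\overline{BC}$. Finally $\overline{DE}$ lies on $x+y=\alpha^2+\alpha$ and every vertex of $P$ satisfies $x+y\le\alpha^2+\alpha$ (e.g.\ $B$ gives $2\alpha\le\alpha^2+\alpha$), so $P$ lies in that half-plane, whereas $(\alpha^2,\alpha^2)$ has $x+y=2\alpha^2>\alpha^2+\alpha$, so it is outside as well. Thus $P$ is empty in $L(\alpha)$ and $h(\alpha)\ge 5$.

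I do not expect any serious obstacle once the right pentagon is written down — the verification above is entirely elementary. The genuinely tricky part is finding a pentagon that is empty: the naïve symmetric candidate $(1,1),(\alpha,1),(\alpha^2,\alpha),(\alpha,\alpha^2),(1,\alpha)$ that ``wraps around'' the lattice fails, because the central point $(\alpha,\alpha)$ falls strictly into its interior. The key idea is to reshape the pentagon so that $(\alpha,\alpha)$ becomes a \emph{vertex} rather than an interior point; after that, the three remaining interior candidates $(1,1),(\alpha,1),(1,\alpha)$ and the single ``far corner'' candidate $(\alpha^2,\alpha^2)$ are conveniently excluded by the three bounding lines $\overline{AB},\overline{BC},\overline{DE}$. (As a side remark, the inequalities used are $1<\alpha$ and $\alpha<\alpha^2$, so this pentagon is in fact empty for every $\alpha>1$; the hypothesis $\alpha\ge 2$ is not needed for this lower bound and only matters because for smaller $\alpha$ one can do strictly better than $5$.)
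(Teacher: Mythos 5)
Your proposal is correct and uses exactly the same pentagon $\{(1,\alpha^2),(\alpha,\alpha),(\alpha^2,1),(\alpha^2,\alpha),(\alpha,\alpha^2)\}$ as the paper, which simply asserts that it is ``easy to check'' this is an empty $5$-gon; you have supplied the routine verification the paper omits. Your side remark that the construction works for all $\alpha>1$ is also consistent with the paper's phrasing ``for any $\alpha$.''
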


\begin{proof} It is easy to check that $\textrm{conv}\{(1,\alpha^2),(\alpha,\alpha),(\alpha^2,1),(\alpha^2,\alpha),(\alpha,\alpha^2)\}$ is an empty polygon in $L(\alpha)$ with $5$ vertices for any $\alpha$; see Figure~\ref{fig-Q}.
\end{proof}

\begin{proposition}
\label{prop-lowerBound-2-phi}
For every $\alpha\in [\frac{1+\sqrt{5}}{2},2)$, we have $h(\alpha)\geq 7$.
\end{proposition}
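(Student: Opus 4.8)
The plan is to prove $h(\alpha)\ge 7$ by exhibiting, for each $\alpha\in[\tfrac{1+\sqrt5}{2},2)$, an explicit convex polygon $P$ with seven vertices, all lying in $L(\alpha)$, that is empty in $L(\alpha)$; by Proposition~\ref{prop-hoffman} this is exactly what is needed. Concretely I would take the seven vertices to be $(\alpha^{a_i},\alpha^{b_i})$ for a fixed short list of exponent pairs $(a_i,b_i)\in\mathbb N_0^2$ (if a single list does not cover the whole range, one splits $[\varphi,2)$ into a few subintervals whose endpoints are roots of low-degree polynomials, exactly in the spirit of the breaking points mentioned in Section~\ref{sec-upperBound}, and gives a list for each). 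The polygon should be a seven-edge analogue of the five-gon of Proposition~\ref{prop-lowerBound-2} that saturates Theorem~\ref{thm-upperBound}: its edges must be distributed among the four types I--IV so that the total is $7$, i.e.\ in the extremal configuration of the proof of Theorem~\ref{thm-upperBound} on this range.

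The verification then has two parts. For \emph{convexity}, for each consecutive triple of vertices I would write the orientation as a $2\times2$ determinant whose entries are differences of powers of $\alpha$, factor out the common powers of $\alpha$, and check the remaining expression has the right sign; each such check reduces to a polynomial inequality in $\alpha$ that follows from the two defining facts of the range, $\alpha^2\ge\alpha+1$ (i.e.\ $\alpha\ge\varphi$) and $\alpha^2<2\alpha$ (i.e.\ $\alpha<2$) — in practice via the gap estimates $\alpha^{k+2}-\alpha^{k+1}\ge\alpha^k$ and $\alpha^{k+1}-\alpha^k<\alpha^k$. For \emph{emptiness}, note that $P$ lies in $[1,\alpha^K]\times[1,\alpha^K]$ for the small constant $K=\max_i\max(a_i,b_i)$, so the only candidates for a lattice point inside $P$ are the $(K+1)^2$ points $(\alpha^i,\alpha^j)$ with $0\le i,j\le K$; for each non-vertex among them I would point to an edge of $P$ having it strictly on the outer side, again a polynomial inequality provable from $\alpha\ge\varphi$ and $\alpha<2$. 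A useful bookkeeping fact is that every ``interior column'' $x=\alpha^i$ of $P$ can contain at most one lattice point in its vertical extent, and that point must be a vertex — otherwise two lattice points two rows apart would sandwich a third interior point of $P$; respecting this is what forces $K$ to be somewhat larger than $2$ and constrains the shape of $P$.

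The hard part will be the design of $P$ itself, so that the roughly $(K+1)^2$ non-vertex lattice points all fall outside while $K$ stays small. The tension is precisely that $\alpha$ near $2$ barely leaves room for a seventh vertex, whereas $\alpha$ near $\varphi$ makes certain triples of lattice points nearly or exactly collinear — not only the unavoidable triples on rows, columns, or lines $y=\alpha^k x$, which already limit the combinatorics, but also the $\varphi$-specific degeneracy $(1,\alpha^3),(\alpha^2,\alpha^2),(\alpha^3,1)$, which are collinear exactly at $\alpha=\varphi$ since $1+\alpha^3-2\alpha^2=(\alpha-1)(\alpha^2-\alpha-1)$. Consequently one must ensure that every such ``critical'' lattice point is either a vertex of $P$ or strictly outside it, and the endpoint $\alpha=\varphi$ (or a small right-neighbourhood of it) may need its own list of vertices. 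I expect essentially all of the effort to go into this finite but delicate case analysis, organized by the type of each edge of $P$ as in the proof of Theorem~\ref{thm-upperBound}.
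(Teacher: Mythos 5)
Your overall strategy (exhibit an explicit empty convex $7$-gon with vertices in $L(\alpha)$) is the right one, but the framework you commit to --- seven vertices $(\alpha^{a_i},\alpha^{b_i})$ with a \emph{fixed} short list of exponent pairs, possibly one list per subinterval of $[\frac{1+\sqrt{5}}{2},2)$, with emptiness checked over the $(K+1)^2$ lattice points of a bounding box of fixed size $K$ --- cannot work, and this is a genuine gap rather than a deferred technicality. The obstruction sits at the right end of the interval: as $\alpha\to 2^-$, every empty $7$-gon in $L(\alpha)$ must span an unbounded number of rows and columns, so no finite subdivision of $[\frac{1+\sqrt{5}}{2},2)$ into subintervals with fixed exponent lists can cover a neighbourhood of $2$. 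You can extract this from the paper's own upper-bound machinery: by the case analysis in the proof of Theorem~\ref{thm-upperBound}, a $7$-gon for $\alpha\in[\frac{1+\sqrt{5}}{2},2)$ must carry either a three-edge chain of type III or three-edge chains of both types II and IV, and rerunning the computation of Lemma~\ref{lem-typeIII} with $s=t=1$ (resp.\ of Lemma~\ref{lem-wedge} with $r=1$) shows that such chains survive in an empty polygon only if the relevant exponent gap $m$ satisfies $\alpha^{-m}\le 2-\alpha$, i.e.\ $m\ge\log_\alpha\bigl(\tfrac{1}{2-\alpha}\bigr)\to\infty$. You half-notice this tension (``$\alpha$ near $2$ barely leaves room for a seventh vertex'') but draw the wrong conclusion: the fix is not a finer case split, it is to let the exponents depend on $\alpha$.

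That is exactly what the paper does. It uses a single one-parameter family
$Q(\alpha)=\{(1,\alpha^k),(\alpha^{k-2},\alpha^{k-1}),(\alpha^{k-1},\alpha^{k-2}),(\alpha^k,1),(\alpha^k,\alpha),(\alpha^{k-1},\alpha^{k-1}),(\alpha,\alpha^k)\}$
with $k=k(\alpha)$ sufficiently large. Convexity reduces (by symmetry) to two sign checks: one needs $\alpha^2\ge\alpha+1$, which is your $\varphi$-degeneracy and holds for all $k$ at once on this range, and the other needs $2\alpha^{k-2}(2-\alpha)\ge 1$, which is precisely where $k\to\infty$ as $\alpha\to 2^-$ is forced. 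Because $k$ is unbounded, emptiness cannot be a pointwise check over a bounded box of lattice points; instead it is a global argument with three separating lines: the line through $(\alpha^{k-2},\alpha^{k-1})$ and $(\alpha^{k-1},\alpha^{k-2})$ cuts off every lattice point with a coordinate below $\alpha^{k-1}$, and the top and right edges cut off every lattice point with a coordinate above $\alpha^{k-1}$. To salvage your write-up you would need to replace the fixed exponent list by such a parametric family and the $(K+1)^2$-point check by a separating-line argument.
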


\begin{proof} Let $k=k(\alpha)$ be a sufficiently large integer, and let \[Q(\alpha)=\{(1,\alpha^k),(\alpha^{k-2},\alpha^{k-1}),(\alpha^{k-1},\alpha^{k-2}),(\alpha^k,1),(\alpha^k,\alpha),(\alpha^{k-1},\alpha^{k-1}),(\alpha,\alpha^k)\};\]
see Figure~\ref{fig-Q}.
We will show that $\textrm{conv}(Q(\alpha))$
is an empty polygon in $L(\alpha)$ with $7$ vertices.

\begin{figure}[ht]
    \centering
    \includegraphics{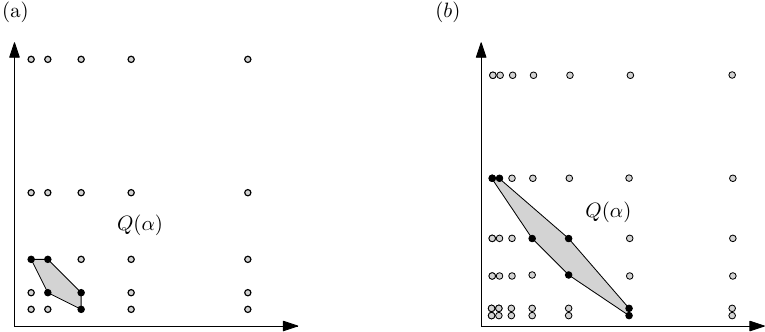}
    \caption{(a) An illustration of the proof of Proposition~\ref{prop-lowerBound-2}. (b) An illustration of the proof of Proposition~\ref{prop-lowerBound-2-phi}.}
    \label{fig-Q}
\end{figure}

First, we show that $Q(\alpha)\setminus \{(\alpha^{k-1},\alpha^{k-1})\}$ is in convex position. 
For this, by symmetry, it is enough to check that $\{(\alpha^{k-1}, \alpha^{k-2}), (\alpha^k, 1), (1, \alpha^k)\}$ is oriented counterclockwise. This is the case exactly if $\alpha^{k-1}-\alpha^k+\alpha^{k-2}-1<0$. By rearranging we get $\alpha^{k-2}(\alpha+1-\alpha^2)<1$, which holds for any $k$, since $\alpha+1-\alpha^2\leq 0$ as $\alpha \geq (1+\sqrt{5})/2$.

Now, to show that the set $Q(\alpha)$ is in convex position, it is sufficient to check that $\{(1, \alpha^k), (\alpha^k, \alpha), (\alpha^{k-1}, \alpha^{k-1})\}$ is oriented counterclockwise. This holds exactly if $\alpha^{k-1}-\alpha^k+\alpha^{k-1}-\alpha\geq 0.$ By rearranging we get $2\alpha^{k-2}(2-\alpha)\geq 1$. Since $1<\alpha <2$, this holds if $k$ is sufficiently large.

Thus, $\textrm{conv}(Q(\alpha))$ has $7$ vertices. 
To show that $\textrm{conv}(Q(\alpha))$ is empty in $L(\alpha)$, we remark that points of the exponential lattice $L(\alpha)$ with both coordinates smaller than $\alpha^{k-1}$ are below the line through $(\alpha^{k-1},\alpha^{k-2})$ and $(\alpha^{k-2},\alpha^{k-1})$. Further, points with at least one coordinate larger than $\alpha^{k-1}$ are either above the line through $(1,\alpha^k)$ and $(\alpha,\alpha^k)$ or to the right of the line through $(\alpha^k,1)$ and $(\alpha^k,\alpha)$.
\end{proof}

\begin{proposition}\label{prop-lowerBound-phi}
For every $\alpha > 1$, we have $h(\alpha)\geq \left\lfloor\sqrt{\frac{1}{\alpha-1}}\right\rfloor$.
\end{proposition}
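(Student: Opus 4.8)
The plan is to construct, for $N := \lfloor\sqrt{1/(\alpha-1)}\rfloor$, an explicit empty polygon in $L(\alpha)$ with $N$ vertices. Since $L(\alpha)$ always contains an empty triangle (for instance $\mathrm{conv}\{(1,1),(\alpha,1),(1,\alpha)\}$, as one checks directly), we may assume $N\ge 3$, which in particular forces $\alpha\le 1+\tfrac19$. The candidate is $P := \mathrm{conv}\{v_0,\dots,v_{N-1}\}$ with
\[v_j := (\alpha^j,\alpha^{N-1-j}), \qquad j=0,1,\dots,N-1 .\]
All $v_j$ lie in $L(\alpha)$ and on the strictly convex decreasing curve $y=\alpha^{N-1}/x$, so they are in strictly convex position; consequently $P$ has exactly $N$ vertices, it is bounded from below by the chain $v_0v_1\cdots v_{N-1}$, and from above by the single edge $v_{N-1}v_0$, which lies on the line $x+y=1+\alpha^{N-1}$ (of slope $-1$).

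The next step is to verify that $P$ is empty in $L(\alpha)$. Because $P$ is contained in the vertical strip $1\le x\le\alpha^{N-1}$, every point of $L(\alpha)$ inside $P$ lies in a column $x=\alpha^j$ with $0\le j\le N-1$, and for $j\in\{0,N-1\}$ the only candidate is the vertex $v_j$ itself. For $j\in\{1,\dots,N-2\}$ the polygon $P$ meets the column $x=\alpha^j$ in the vertical segment from $v_j$ (on the lower chain) up to the top edge at height $1+\alpha^{N-1}-\alpha^j$, while the point of $L(\alpha)$ lying immediately above $v_j=(\alpha^j,\alpha^{N-1-j})$ in that column is $(\alpha^j,\alpha^{N-j})$. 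Hence $P$ is empty in $L(\alpha)$ if and only if
\[\alpha^j+\alpha^{N-j}>1+\alpha^{N-1}\qquad\text{for all }j\in\{1,\dots,N-2\}.\]
Since $\alpha^j+\alpha^{N-j}\ge 2\alpha^{N/2}$ by the AM--GM inequality, it suffices to prove the single inequality $2\alpha^{N/2}>1+\alpha^{N-1}$. Writing $t:=\alpha^{N/2}$ and using $\alpha^{N-1}=t^2/\alpha$, this is equivalent to $t^2-2\alpha t+\alpha<0$; as $t=\alpha^{N/2}>1>\alpha-\sqrt{\alpha^2-\alpha}$, it reduces to showing $\alpha^{N/2}<\alpha+\sqrt{\alpha^2-\alpha}$.

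Finally one deduces this last inequality from the definition of $N$. From $N\le 1/\sqrt{\alpha-1}<N+1$ we obtain $\alpha-1\le 1/N^2$ and $\sqrt{\alpha-1}>1/(N+1)$, so using $\ln(1+u)\le u$ and $e^x\le 1+x+x^2$ for $x\in[0,1]$,
\[\alpha^{N/2}\le\Bigl(1+\tfrac1{N^2}\Bigr)^{N/2}\le e^{1/(2N)}\le 1+\tfrac1{2N}+\tfrac1{4N^2}\le 1+\tfrac1{N+1}<1+\sqrt{\alpha-1}\le\alpha+\sqrt{\alpha^2-\alpha},\]
where the bound $1+\tfrac1{2N}+\tfrac1{4N^2}\le 1+\tfrac1{N+1}$ is just $(2N+1)(N+1)\le 4N^2$, i.e.\ $2N^2\ge 3N+1$, valid for $N\ge 2$. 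This completes the verification and yields $h(\alpha)\ge N=\lfloor\sqrt{1/(\alpha-1)}\rfloor$.

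I expect the main obstacle to be not any individual computation but \emph{finding} the right construction: the vertices should lie on a single hyperbola and in consecutive columns, so that no column is ``skipped'', and the polygon should be closed off by the diagonal of slope $-1$; it is precisely this choice that collapses the emptiness requirement to the clean inequality $2\alpha^{N/2}>1+\alpha^{N-1}$. After that the estimates are routine, with one mild point of care: one must use the exact coupling $N=\lfloor 1/\sqrt{\alpha-1}\rfloor$ rather than merely $\alpha\le 1+1/N^2$, since the inequality is tight up to a constant factor (the same construction in fact produces empty polygons with roughly $2/\sqrt{\alpha-1}$ vertices).
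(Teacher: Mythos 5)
Your construction is exactly the paper's: consecutive lattice points $(\alpha^j,\alpha^{N-1-j})$ on a single hyperbola, with emptiness reduced to the inequality $2\alpha^{N/2}>1+\alpha^{N-1}$ and that inequality verified by elementary exponential estimates tied to $N=\lfloor\sqrt{1/(\alpha-1)}\rfloor$. The only cosmetic difference is that you check emptiness column by column and invoke AM--GM, whereas the paper compares the next hyperbola $xy=\alpha^{k+1}$ against the closing line of slope $-1$ at its nearest point; both routes collapse to the same inequality, and your argument is correct.
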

\begin{proof}
For a positive integer $k$, let $P(k)=\{(\alpha^i,\alpha^{k-i}):1\leq i \leq k\}$. Since $P(k)$ is contained in the hyperbola $h= \{(x,y) \in \mathbb{R}^2\colon x,y>0, xy=\alpha^k\}$, the points of $P(k)$ are in convex position, and $\textrm{conv}(P(k))$ has $k$ vertices. We will show that if $k\leq \sqrt{\frac{1}{\alpha-1}}$, then $\textrm{conv}(P(k))$ is empty.

For points $(x,y)$ of $L(\alpha)$ above $h$, we have $xy\geq \alpha^{k+1}$. Further, points $(x,y)$ of $L(\alpha)$ with $xy\geq \alpha^{k+2}$ are separated from $h$ by the hyperbola $h'= \{(x,y) \in \mathbb{R}^2\colon x,y>0, xy=\alpha^{k+1}\}$. Thus, it is sufficient to check that $h'$ is above the line $\ell$ connecting $(1,\alpha^k)$ with $(\alpha^k,1)$. The closest point of $h'$ to $\ell$ is $(\alpha^{(k+1)/2},\alpha^{(k+1)/2})$, thus it is sufficient to check that this point is above $\ell$. This holds if $2\alpha^{(k+1)/2}-\alpha^k-1\geq 0$ and we show that this inequality is satisfied for $k\leq \sqrt{\frac{1}{\alpha-1}}$.

Let $\alpha = 1 + s^2$ with some $s \in (0,1)$. In this notation, $k \leq 1/s$ and we need to prove that $2(1+s^2)^{(k+1)/2}\geq (1+s^2)^{k}+1$. Since $(1+s^2)^{(k+1)/2} \geq 1 + s^2\frac{k+1}{2}$ by the Bernoulli inequality, and $(1+s^2)^{k} \leq e^{s^2k} $, it is sufficient to prove the stronger inequality $2(1 + s^2\frac{k+1}{2} )\geq e^{s^2k} + 1$. By taking the derivative of both sides with respect to $k$ we have $s^2 \leq s^2e^{s^2k}$, thus it is enough to check the inequality when $k$ is maximal. If $k = 1/s$, it is equivalent to $1 + s + s^2 \geq e^s$, which holds for $s \in (0,1)$ as $e^s = 1 + s + \sum_{n=2}^\infty \frac{s^n}{n!} \leq 1 + s + s^2 \sum_{n=2}^\infty \frac{1}{n!} = 1 + s + s^2(e-2)<1 + s + s^2$.
\end{proof}

\section{Proof of Proposition~\ref{prop-fibonacci}}
\label{sec-fibonacci}

Let us denote $\mathcal{F} = \{F_n \colon n \in \mathbb{N}_0\}^2$.
For every positive integer $k$, we show that $h(\mathcal{F}) \geq k+1$.
We will show that the points $(F_{i+2},F_i)$ with odd $i\in \{1,\dots,2k+1\}$ are vertices of an empty convex polygon $P$ in $\mathcal{F}$. 

First, we show that the points $(F_{i+2},F_i)$ with odd $i\in \{1,\dots,2k+1\}$ are in convex position.
To show that, it suffices to show that the slopes of lines determined by three consecutive such points are decreasing.
That is, we want to prove
\[\frac{F_i-F_{i-2}}{F_{i+2}-F_i} > \frac{F_{i+2}-F_i}{F_{i+4}-F_{i+2}}\]
for every odd $i\in \{1,\dots,2k-3\}$.
Since $F_k = F_{k-1}+F_{k-2}$ for every $k\geq 2$, this inequality can be rewritten as
\[\frac{F_{i-1}}{F_{i+1}} > \frac{F_{i+1}}{F_{i+3}}.\]
Thus, we want to show that $F_{i-1} \cdot F_{i+3} > F^2_{i+1}$ for odd $i$.
This is indeed true, as $F_{i-1} \cdot F_{i+3} - F^2_{i+1} = (-1)^{i+1-2}F_2^2 > 0$ by the Catalan identity of the Fibonacci numbers.

To show that the polygon $P$ is empty in $\mathcal{F}$, consider the line $L = \{(x,y) \in \mathbb{R}^2 \colon y= x/\varphi^2\}$.
Any point $(F_{i+2},F_i)$ with odd $i$ lies below $L$ because
\[\frac{F_{i+2}}{\varphi^2} = \frac{1}{\varphi^2} \cdot \frac{\varphi^{i+3} - \psi^{i+3} }{\sqrt{5}} > \frac{\varphi^{i+1} - \psi^{i+1}}{\sqrt{5}} = F_i\]
since $\varphi^2>\psi^2$ and $i+3$, $i+1$ are both even implying $\psi^{i+3},\psi^{i+1} > 0$.
Analogously, all points $(F_{i+2},F_i)$ with even $i$ lie above $L$.
For any $i$, every point $(F_j,F_i)$ with $j \leq i+1$ lies above $L$, because $F_i \geq F_{j-1} > F_j/\varphi^2$.
Each point $(F_{i+2},F_i)$ with odd $i$ lies at vertical distance less than $1/2$ from $L$ as
\begin{align*}\frac{F_{i+2}}{\varphi^2} &= \frac{1}{\varphi^2} \cdot \frac{\varphi^{i+3} - \psi^{i+3} }{\sqrt{5}} = \frac{\varphi^{i+1} - \psi^{i+1}}{\sqrt{5}} + \frac{\varphi^2\psi^{i+1}-\psi^{i+3}}{\varphi^2\sqrt{5}} \leq F_i + \frac{\varphi^2\psi^2-\psi^4}{\sqrt{5}} \\
&< F_i + \frac{1}{2}.
\end{align*}
Any point $(F_{i+2},F_j)$ with $j \leq i-1$ lies below $L$ at vertical distance at least $1/2$ since the distance is either at least $F_i-F_j \geq 1$ if $i$ is odd or it is at least $F_i-F_j-\frac{1}{2}\geq\frac{1}{2}$ if $i$ is even.
Thus the only points of $\mathcal{F}$ lying between the parallel lines $y = x/\varphi^2 - 1/2$ and $L$ are the points $(F_{i+2}, F_i)$ with $i$ odd.
It follows that $P$ is an empty convex polygon in $\mathcal{F}$ and $h(\mathcal{F}) \geq k+1$.

\section{Proof of Theorem~\ref{thm-nondiagonal}}
\label{sec-nondiagonal}

Let $\alpha,\beta>1$ be two real numbers. 
We prove that $h(L(\alpha,\beta))$ is finite if and only if $\log_\alpha(\beta)$ is a rational number.

\subsection{Finite upper bound}

First, assume that $\log_\alpha(\beta) \in \mathbb{Q}$.
We will use Theorem~\ref{thm-upperBound} to show that the number $h(L(\alpha,\beta))$ is finite.
Since $\log_\alpha(\beta) \in \mathbb{Q}$ and $\alpha,\beta>1$, there are positive integers $p$  and $q$ such that $\beta = \alpha^{p/q}$.
Suppose for contradiction that there is an empty polygon $P$ in $L(\alpha,\beta)$ with at least $pq\cdot h(\alpha^p)+1$ vertices. 
Note that this number of vertices is finite by Theorem~\ref{thm-upperBound}.
For $k \in \{0,\dots,q-1\}$, we call a row of $L(\alpha,\beta)$ \emph{congruent to $k$} if it is of the form $\{\alpha^n \colon n \in \mathbb{N}_0\} \times \beta^m$  for some integer $m$ congruent to $k$ modulo~$q$. 
Analogously, a column of $L(\alpha,\beta)$ is \emph{congruent to $\ell \in\{0,\dots,p-1\}$} if it is of the form $\alpha^m \times \{\beta^n \colon n \in \mathbb{N}_0\}$ for some $m$ congruent to $\ell$ modulo $p$.

Now, since $P$ contains at least $pq\cdot h(\alpha^p)+1$ vertices, the pigeonhole principle implies that there are integers $k\in \{0,\dots,q-1\}$ and $\ell \in \{0,\dots,p-1\}$ such that at least $h(\alpha^p)+1$ vertices of $P$ that all lie in rows congruent to $k$ and in columns congruent to~$\ell$.
Let $P'$ be the convex polygon that is spanned by these vertices.
We claim that the polygon $P'$ is not empty in~$L(\alpha,\beta)$.
Since $P' \subseteq P$, we get that $P$ is also not empty in $L(\alpha,\beta)$, which contradicts our assumption about $P$.

To show that $P'$ is not empty in $L(\alpha,\beta)$, consider the subset $L$ of $L(\alpha,\beta)$ that contains only points of $L(\alpha,\beta)$ that lie in rows congruent to $k$ and in columns congruent to $\ell$.
Clearly, vertices of $P'$ lie in $L$ and $L$ is an affine image of $L(\alpha^p)$, which is scaled by the factors $\alpha^\ell$ and $\beta^k=\alpha^{kp/q}$ in the $x$- and $y$-direction, respectively.
Since affine mappings preserve incidences and $P'$ has at least $h(\alpha^p)+1$ vertices, it follows that $P'$ is not empty in~$L$.
Since $L \subseteq L(\alpha,\beta)$, $P'$ is not empty in $L(\alpha,\beta)$ either.

\subsection{Finite lower bound}

Let $\log_\alpha (\beta) \in \mathbb{Q}$ and $\beta = \alpha^{p/q}$ for some relative prime positive integers $p$ and $q$.
Observe that in this case $L(\alpha, \beta) \subset L(\alpha^{1/q})$. Thus, if an empty polygon in $L(\alpha^{1/q})$ is a subset of $L(\alpha, \beta)$, then it is an empty polygon in $L(\alpha, \beta)$.

Let $k = \left\lfloor\sqrt{1/(\alpha^{1/q} - 1)}\right\rfloor$ and consider the set $P= \{(\alpha^{i/q}, \alpha^{(k-i)/q}): 1 \leq i \leq k\}$. It is an empty polygon in $L(\alpha^{1/q})$, as it is shown in the proof of Proposition~\ref{prop-lowerBound-phi}. Since its subset $P ' = \{(\alpha^{i/q}, \alpha^{(k-i)/q}): 1 \leq i \leq k \text{ with } q | i \text{ and } p | k -i \}$ is a subset of $L(\alpha, \beta)$ and an empty polygon in $L(\alpha^{1/q})$, it is an empty polygon in $L(\alpha, \beta)$ with $\lfloor k/pq \rfloor$ vertices.

\subsection{Infinite lower bound}

Now, assume that $\log_\alpha(\beta) \notin \mathbb{Q}$.
We will find a subset of $L(\alpha,\beta)$ forming empty convex polygon in $L(\alpha,\beta)$ with arbitrarily many vertices.
To do so, we use the theory of continued fractions, so we first introduce some definitions and notation.

\subsubsection{Continued fractions}

Here, we recall mostly basic facts about continued fractions, which we use in the proof.
Most of the results that we state can be found, for example, in the book by Khinchin~\cite{khin97}.

For a positive real number $r$, the \emph{(simple) continued fraction of $r$} is an expression of the form 
\[r=a_0 + \frac{1}{a_1+\frac{1}{a_2 + \frac{1}{a_3 + \cdots}}},\]
where $a_0 \in \mathbb{N}_0$ and $a_1,a_2,\dots$ are positive integers.
The simple continued fraction of $r$ can be written in a compact notation as 
\[[a_0;a_1,a_2,a_3,\dots].\]
For every $n \in \mathbb{N}_0$, if we denote $\frac{p_n}{q_n} = [a_0;a_1,a_2,\dots,a_n]$ and set $p_{-1}=1$, $p_0=a_0$, $q_{-1}=0$, $q_0=1$, then the numbers $p_n$ and $q_n$ satisfy the recurrence
\begin{equation}
\label{eq-continued}
p_n = a_np_{n-1} + p_{n-2} \;\;\text{ and }\;\;
q_n = a_nq_{n-1} + q_{n-2}
\end{equation}
for each $n\in\mathbb{N}$.
Observe that if $r$ is irrational, then its continued fraction has infinitely many coefficients.
Also, it follows from~\eqref{eq-continued} that $\frac{p_n}{q_n} < r$ for $n$ even and $\frac{p_n}{q_n} > r$ for $n$ odd.

For example, if $r=\log_2(3)$, we get the continued fraction $[1;1,1,2,2,3,1,5,2,23,\dots]$ and the sequence $\left(\frac{p_n}{q_n}\right)_{n \in \mathbb{N}_0} = \left(\frac{1}{1},\frac{2}{1},\frac{3}{2},\frac{8}{5},\frac{19}{12},\frac{65}{41},\frac{84}{53},\frac{485}{306},\dots\right)$.
For $r=\frac{1+\sqrt{5}}{2}$, we have $[1;1,1,1,\dots]$ and $\left(\frac{p_n}{q_n}\right)_{n \in \mathbb{N}_0} = \left(\frac{1}{1},\frac{2}{1},\frac{3}{2},\frac{5}{3},\frac{8}{5},\frac{13}{8},\frac{21}{13},\frac{34}{21},\dots\right)$.

We will call the fractions $\frac{p_n}{q_n}$ the \emph{convergents} of $r$. A \emph{semi-convergent} of $r$ is a number
$\frac{p_{n-1}+ip_n}{q_{n-1}+iq_n}$ where $i \in \{0,1,\dots,a_{n+1}\}$. Note that each convergent of $r$ is also a semi-convergent of $r$. The names are motivated by the use of convergents and semi-convergents as rational approximations of an irrational number $r$.

A rational number $\frac{p}{q}$ is a \emph{best approximation} of an irrational number $r$, if any fraction $\frac{p'}{q'} \neq \frac{p}{q}$ with $q' < q$ satisfies \[\left|q'\left(r-\frac{p'}{q'}\right)\right| > \left| q\left(r-\frac{p}{q}\right) \right|.\]
A rational number $\frac{p}{q}$ is a \emph{best lower approximation of $r$} if 
\[q'\left(r-\frac{p'}{q'}\right) > q\left(r-\frac{p}{q}\right) \geq 0\]
for all rational numbers $\frac{p'}{q'}$ with $\frac{p'}{q'} \leq r$, $\frac{p}{q} \neq \frac{p'}{q'}$, and $0 < q' \leq q$.
Similarly,  
$\frac{p}{q}$ is a \emph{best upper approximation of $r$} if 
\[q'\left(r-\frac{p'}{q'}\right) < q\left(r-\frac{p}{q}\right) \leq 0\]
for all rational numbers $\frac{p'}{q'}$ with $\frac{p'}{q'} \geq r$, $\frac{p}{q} \neq \frac{p'}{q'}$, and $0 < q' \leq q$. 

It is a well known fact that convergents are best approximations of $r$~\cite{khin97}.
The following lemma about best lower and best upper approximations is a recent result of Han{\v{c}}l and Turek \cite{hanTur19}. Our definitions of best lower or upper approximations correspond to their definitions of best lower or upper approximations \emph{of the second kind}. The lemma follows from Theorem 4.5 of \cite{hanTur19}.

\begin{lemma}[\cite{hanTur19}]
\label{lem-bestApprox}
Let $r$ be a real number with $r=[a_0;a_1,a_2,\dots]$ and let $\frac{p_n}{q_n}$ be the $n$th convergent of $r$ for each $n \in \mathbb{N}_0$.
Then, the following two statements hold.
\begin{enumerate}
    \item\label{item2} The set of best lower approximations of $r$ consists of semi-convergents $\frac{p_{n-1}+ip_n}{q_{n-1}+iq_n}$ of~$r$ with $n$ odd and $0 \leq i < a_{n+1}$.
     \item\label{item3} The set of best upper approximations of $r$ consists of semi-convergents $\frac{p_{n-1}+ip_n}{q_{n-1}+iq_n}$ of~$r$ with $n$ even and $0 \leq i < a_{n+1}$, except for the pair $(n,i)=(0,0)$.
\end{enumerate}

\end{lemma}

Finally, a real number $r$ is \emph{restricted} if there is a positive integer $M$ such that all the partial denominators $a_i$ from the continued fraction of $r$ are at most $M$.
The restricted numbers are exactly those numbers $r$ that are badly approximable by rationals~\cite{khin97}, that is, there is a constant $c>0$ such that for every $\frac{p}{q} \in \mathbb{Q}$ we have $\left|r-\frac{p}{q}\right|>\frac{c}{q^2}$.

We divide the rest of the proof of Theorem~\ref{thm-nondiagonal} into two cases, depending on whether $\log_\alpha(\beta)$ is restricted or not.

\subsubsection{Unrestricted case}

First, we  assume that $\log_\alpha(\beta)$ is not restricted.
Let $[a_0;a_1,a_2,a_3,\dots]$ be the continued fraction of $\log_\alpha(\beta)$ with $\frac{p_n}{q_n} = [a_0;a_1,\dots,a_n]$ for every $n \in \mathbb{N}_0$.
Then, for every positive integer $m$, there is a positive integer $n(m)$ such that $a_{n(m)+1} \geq m$. 
We use this assumption to construct, for every positive integer $m$, a convex polygon with at least $m$ vertices from $L(\alpha,\beta)$ that is empty in $L(\alpha,\beta)$.

For a given $m$, consider the integer $n(m)$ and let $W$ be the set of points
\[w_i = (\alpha^{p_{n(m)-1}+ip_{n(m)}},\beta^{q_{n(m)-1}+iq_{n(m)}})\]
where $i \in \{0,1,\dots,a_{n(m)+1}\}$.
That is, we consider points where the exponents form semi-convergents $\frac{p_{n(m)-1}+ip_{n(m)}}{q_{n(m)-1}+iq_{n(m)}}$ to $\log_\alpha(\beta)$.
We abbreviate $p_{n,i} = p_{n(m)-1}+ip_{n(m)}$ and $q_{n,i} = q_{n(m)-1}+iq_{n(m)}$.
Observe that $|W|\geq m$.
We will show that $W$ is the vertex set of an empty convex polygon in $L(\alpha,\beta)$.
To do so, we assume without loss of generality that $n(m)$ is even so that $\frac{\beta^{q_{n(m)}}}{\alpha^{p_{n(m)}}} > 1$.
The other case when $n(m)$ is odd is analogous.

First, we show that $W$ is in convex position.
In fact, we prove that all triples $(w_{i_1},w_{i_2},w_{i_3})$ with $i_1<i_2<i_3$ are oriented counterclockwise.
It suffices to show this for every triple $(w_i,w_{i+1},w_{i+2})$.
To do so, we need to prove the inequality
\[\frac{y(w_{i+2})-y(w_{i+1})}{x(w_{i+2})-x(w_{i+1})} = \frac{\beta^{q_{n,i+2}}-\beta^{q_{n,i+1}}}{\alpha^{p_{n,i+2}}-\alpha^{p_{n,i+1}}} > \frac{\beta^{q_{n,i+1}}-\beta^{q_{n,i}}}{\alpha^{p_{n,i+1}}-\alpha^{p_{n,i}}} = \frac{y(w_{i+1})-y(w_i)}{x(w_{i+1})-x(w_i)}.\]
After dividing by $\frac{\beta^{q_{n(m)-1}}}{\alpha^{p_{n(m)-1}}}$, this can be written as
\[\frac{\beta^{(i+2)q_{n(m)}}-\beta^{(i+1)q_{n(m)}}}{\alpha^{(i+2)p_{n(m)}}-\alpha^{(i+1)p_{n(m)}}} > \frac{\beta^{(i+1)q_{n(m)}}-\beta^{iq^{n(m)}}}{\alpha^{(i+1)p_{n(m)}}-\alpha^{ip_{n(m)}}}.\]
If divide both sides by $\frac{\beta^{(i+1)q_{n(m)}}-\beta^{iq_{n(m)}}}{\alpha^{(i+1)p_{n(m)}}-\alpha^{ip_{n(m)}}}$, then the above inequality becomes \[\frac{\beta^{q_{n(m)}}}{\alpha^{p_{n(m)}}} > 1.\]
This is true as ${n(m)}$ is even.

It remains to prove that the polygon $Q$ with the vertex set $W$ is empty in $L(\alpha,\beta)$.
Suppose for contradiction that there is a point $(\alpha^p,\beta^q)$ of $L(\alpha,\beta)$ lying in the interior of~$Q$.
Let $i$ be the minimum positive integer from $\{1,\dots,a_{n(m)+1}\}$ such that $q<q_{n,i}$.
Such an $i$ exists as $(\alpha^p,\beta^q)$ is in the interior of $Q$.
We then have $q_{n,i-1} < q< q_{n,i}$.
Since $(\alpha^p,\beta^q)$ is in the interior of $Q$ and $W$ lies below the line $x=y$, we have $\frac{p}{q} > \log_\alpha(\beta)$. So it is enough to prove that $(\alpha^p,\beta^q)$ does not lie above the line $\overline{w_{i-1}w_i}$.

We have $p_{n,i}-\log_\alpha(\beta)q_{n,i} < p_{n,i-1} - \log_\alpha(\beta)q_{n,i-1}$ as $\frac{p_{n,i}}{q_{n,i}}$ is a best upper approximation of $\log_\alpha(\beta)$ and $q_{n,i-1} < q_{n,i}$. This implies $\frac{\beta^{q_{n,i-1}}}{\alpha^{p_{n,i-1}}}<\frac{\beta^{q_{n,i}}}{\alpha^{p_{n,i}}}$, or equivalently that $w_i$ lies above the line determined by $w_{i-1}$ and the origin.

Now if $(\alpha^p, \beta^q)$ lies above the line $\overline{w_{i-1}w_i}$, then it also lies above the line determined by $w_{i-1}$ and the origin. Thus, $\frac{\beta^{q_{n,i-1}}}{\alpha^{p_{n,i-1}}}<\frac{\beta^q}{\alpha^p}$, implying
\[p-\log_\alpha(\beta)q < p_{n,i-1} - \log_\alpha(\beta)q_{n,i-1},\]
which means that $\frac{p}{q}$ is a better upper approximation of $\log_\alpha(\beta)$ than $\frac{p_{n,i-1}}{q_{n,i-1}}$. Thus, there exists a best upper approximation $\frac{p^*}{q^*}$ of $\log_\alpha(\beta)$ with $q_{n,i-1} < q^* < q_{n,i}$. This contradicts part~\ref{item3} of Lemma~\ref{lem-bestApprox} as $\frac{p^*}{q^*}$ is not a semi-convergent of $\log_\alpha(\beta)$.

\subsubsection{Restricted case}

Now, assume that the number $\log_\alpha(\beta)$ is restricted.
Let $[a_0;a_1,a_2,a_3,\dots]$ be the continued fraction of $\log_\alpha(\beta)$ with $\frac{p_n}{q_n} = [a_0;a_1,\dots,a_n]$ for every $n \in \mathbb{N}_0$.
Let $M=M(\alpha,\beta)$ be a number satisfying \begin{equation}\label{eq: M}
a_n \leq M
\end{equation} for every $n \in \mathbb{N}_0$ and let $c=c(\alpha,\beta)>0$ be a constant such that 
\begin{equation}\label{eq: c}
\left|\log_\alpha(\beta)-\frac{p}{q}\right|>\frac{c}{q^2}
\end{equation}
holds for every $\frac{p}{q}\in \mathbb{Q}$.
Recall that $\frac{\alpha^{p_n}}{\beta^{q_n}}<1$ for even $n$ and $\frac{\alpha^{p_n}}{\beta^{q_n}}>1$ for odd $n$. 
Note also that the sequence $\left(\frac{\alpha^{p_n}}{\beta^{q_n}}\right)_{n \in \mathbb{N}_0}$ converges to $1$ as  $\left(\frac{p_n}{q_n}\right)_{n \in \mathbb{N}_0}$ converges to $\log_\alpha(\beta)$.
Moreover, the terms of $\left(\frac{p_n}{q_n}\right)_{n \in \mathbb{N}_0}$ with odd indices form a decreasing subsequence and the terms with even indices determine an increasing subsequence.

Let $n_0=n_0(\alpha,\beta)$ be a sufficiently large positive integer and let $V$ be the set of points $v_n =(\alpha^{p_n},\beta^{q_n})$ for every odd $n \geq n_0$.
Note that $V$ is a subset of $L(\alpha,\beta)$.

We first show that $V$ is in convex position. 
In fact, we prove a stronger claim by showing that the orientation of every triple $(v_{n_1},v_{n_2},v_{n_3})$ with $n_1 < n_2 < n_3$ is counterclockwise.
It suffices to show this for every triple $(v_{n-4},v_{n-2},v_n)$.
To do so, we prove that the slopes of the lines determined by consecutive points of $V$ are increasing, that is,
\[\frac{y(v_n)-y(v_{n-2})}{x(v_n)-x(v_{n-2})} = \frac{\beta^{q_n}-\beta^{q_{n-2}}}{\alpha^{p_n}-\alpha^{p_{n-2}}}  > \frac{\beta^{q_{n-2}}-\beta^{q_{n-4}}}{\alpha^{p_{n-2}}-\alpha^{p_{n-4}}} = \frac{y(v_{n-2})-y(v_{n-4})}{x(v_{n-2})-x(v_{n-4})}\]
for every even $n \geq n_0$.
By dividing both sides of the inequality with $\frac{\beta^{q_{n-2}}}{\alpha^{p_{n-2}}}$, we rewrite this expression as
\[\frac{\beta^{q_n-q_{n-2}}-1}{\alpha^{p_n-p_{n-2}}-1}  > \frac{1-\beta^{q_{n-4}-q_{n-2}}}{1-\alpha^{p_{n-4}-p_{n-2}}}.\]
Using~\eqref{eq-continued}, this is the same as
\[\frac{\beta^{a_nq_{n-1}}-1}{\alpha^{a_np_{n-1}}-1}  > \frac{1-\beta^{-a_{n-2}q_{n-3}}}{1-\alpha^{-a_{n-2}p_{n-3}}}.\]
The above inequality can be rewritten as 
\[(\beta^{a_nq_{n-1}}-1)(1-\alpha^{-a_{n-2}p_{n-3}})>(\alpha^{a_np_{n-1}}-1)(1-\beta^{-a_{n-2}q_{n-3}}),\]
where $\beta^{q_{n-1}} > \alpha^{p_{n-1}} > 1$ as $n-1$ is even.
Therefore, if the above inequality holds for $a_n=1$, then it holds for any $a_n$ as this number is always at least $1$.
Thus, it suffices to show 
\begin{equation}
\label{eq-innequality}
(\beta^{q_{n-1}}-1)(1-\alpha^{-a_{n-2}p_{n-3}})>(\alpha^{p_{n-1}}-1)(1-\beta^{-a_{n-2}q_{n-3}}).
\end{equation}
We prove this using the following simple auxiliary lemma.

\begin{lemma}
\label{lem-function}
Consider the function $f \colon \mathbb{R}^+\times \mathbb{R}^+ \to \mathbb{R}$ given by  $f(x,y) = (x-1)(1-1/y)$.
Let $x,y,x',y'>1$ be real numbers such that 
$1-\frac{1}{y}-\frac{x}{x'}>0$.
Then, $f(x',y)>f(x,y')$.
\end{lemma}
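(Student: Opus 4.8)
The plan is to sandwich both sides of the desired inequality $f(x',y) > f(x,y')$ between the single quantity $x-1$. Concretely, I would prove
\[
f(x,y') \;<\; x-1 \;<\; f(x',y),
\]
which gives the lemma at once.

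For the left-hand inequality, observe that $y' > 1$ yields $0 < 1 - \tfrac{1}{y'} < 1$, while $x > 1$ yields $x - 1 > 0$; multiplying these gives $f(x,y') = (x-1)\bigl(1-\tfrac{1}{y'}\bigr) < x-1$. For the right-hand inequality, I would first rewrite the hypothesis $1 - \tfrac{1}{y} - \tfrac{x}{x'} > 0$ in the equivalent form $x < x'\bigl(1-\tfrac{1}{y}\bigr)$ (using $x' > 0$), so that $x - 1 < x'\bigl(1-\tfrac{1}{y}\bigr) - 1$. It then remains to check the elementary inequality $x'\bigl(1-\tfrac{1}{y}\bigr) - 1 < (x'-1)\bigl(1-\tfrac{1}{y}\bigr) = f(x',y)$; after expanding the right side this reduces to $0 < \tfrac{1}{y}$, which holds since $y > 1$. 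Chaining the two estimates yields $f(x,y') < f(x',y)$.

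The computation is entirely elementary, so I do not expect a genuine obstacle; the only points requiring a little care are to invoke the hypothesis in exactly the form $x < x'\bigl(1-\tfrac{1}{y}\bigr)$ and to make sure every inequality in the chain is \emph{strict}, which it is precisely because $x, y, y' > 1$ (so $x-1 > 0$, $\tfrac{1}{y} > 0$, and $1 - \tfrac{1}{y'} < 1$). One should also note that the symmetric form $1 - \tfrac{1}{y} - \tfrac{x}{x'}$ of the hypothesis is what makes it convenient to land on the common bound $x - 1$, rather than trying to compare the two products directly.
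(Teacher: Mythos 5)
Your proof is correct and uses the same two elementary estimates as the paper (discarding the positive term $\frac{x-1}{y'}$ and bounding $\frac{x'-1}{y}$ by $\frac{x'}{y}$ before invoking the hypothesis $1-\frac{1}{y}-\frac{x}{x'}>0$); the only difference is that you organize them as a sandwich through the intermediate quantity $x-1$, whereas the paper bounds the difference $f(x',y)-f(x,y')$ directly. This is essentially the same argument, just repackaged.
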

\begin{proof}
We have
\begin{align*}
f(x',y) - f(x,y') &= \left(x'-1\right)\left(1-\frac{1}{y}\right) - \left(x-1\right)\left(1-\frac{1}{y'}\right)\\
&=x'-\frac{x'-1}{y}-x+\frac{x-1}{y'}>x'-\frac{x'}{y}-x=x'\left(1-\frac{1}{y}-\frac{x}{x'}\right)>0,
\end{align*}
where 
the last inequality follows from $1-\frac{1}{y}-\frac{x}{x'}>0$.
\end{proof}

Now, by choosing $x=\alpha^{p_{n-1}}$, $x'=\beta^{q_{n-1}}$, $y=\alpha^{a_{n-2}p_{n-3}}$, and $y'=\beta^{a_{n-2}q_{n-3}}$, the inequality~\eqref{eq-innequality} becomes $f(x',y)>f(x,y')$.
In order to prove it, we just need to verify the assumptions of Lemma~\ref{lem-function}.
We clearly have $x,x',y,y'>1$.
It now suffices to show $1-\frac{1}{y}-\frac{x}{x'}>0$.
By~\eqref{eq: c}, we obtain that $q_{n-1}\log_\alpha(\beta)-p_{n-1} \geq c/q_{n-1}$, thus 
\[\frac{x}{x'} =\frac{\alpha^{p_{n-1}}}{\beta^{q_{n-1}}} \leq \alpha^{-c/q_{n-1}}.\] 
Now, to bound $q_{n-1}$ in terms of $p_{n-3}$, equation~\eqref{eq-continued} gives
\begin{align*}
q_{n-1} &= a_{n-1}q_{n-2}+q_{n-3} \leq (M+1)q_{n-2} = (M+1)(a_{n-2}q_{n-3}+q_{n-4})\\
&\leq (M+1)^2q_{n-3} \leq 2\log_\beta(\alpha)(M+1)^2p_{n-3},
\end{align*}
where we used \eqref{eq: M} and $q_{n-4} \leq q_{n-3} \leq q_{n-2}$, $q_{n-3} \leq 2\log_\beta(\alpha)p_{n-3}$ for $n$ large enough.
It follows that $q_{n-1} \leq M'p_{n-3}$ for a suitable constant $M'=M'(\alpha,\beta)>0$.
Thus, 
\[1-\frac{1}{y}-\frac{x}{x'} \geq 1  - \alpha^{-a_{n-2}p_{n-3}}- \alpha^{-c/q_{n-1}} \geq 1  - \alpha^{-a_{n-2}p_{n-3}}- \alpha^{-c/(M'p_{n-3})},\]
which is at least 
\[\frac{c\ln{\alpha}}{2M'
p_{n-3}} - \frac{1}{\alpha^{a_{n-2}p_{n-3}}}\]
as $1-c\ln{\alpha}/(2M'
p_{n-3}) \geq e^{-2c\ln{\alpha}/(2M'
p_{n-3})} = \alpha^{-c/(M'p_{n-3})}$ if $0<c\ln{\alpha}/(2M'
p_{n-3})<1/2$.
The last expression is positive if $n \geq n_0$ and $n_0$ is sufficiently large so that $p_{n-3}$ is large enough.

It remains to show that the convex polygon $P$ with the vertex set $V$ is empty in~$L(\alpha,\beta)$.
We proceed analogously as in the unrestricted case.
Suppose for contradiction that there is a point $(\alpha^p,\beta^q)$ of $L(\alpha,\beta)$ lying in the interior of~$P$.
Then, let $v_n=(\alpha^{p_n},\beta^{q_n})$ be the lowest vertex of $P$ that has $(\alpha^p,\beta^q)$ below.
Such a vertex $v_n$ exists, as $V$ contains points with arbitrarily large $y$-coordinate.
By the choice of $v_n$, we obtain $q_{n-2} < q< q_n$.
Since $(\alpha^p,\beta^q)$ is in the interior of $P$ and $V$ lies below the line $x=y$, we have $\frac{p}{q} > \log_\alpha(\beta)>\frac{p_{n-1}}{q_{n-1}}$.
Moreover, since all triples from $V$ are oriented counterclockwise, the point $(\alpha^p,\beta^q)$ lies above the line $\overline{v_{n-2}v_n}$.

Let \[w_i = (\alpha^{p_{n-2}+ip_{n-1}},\beta^{q_{n-2}+iq_{n-1}})\]
where $i \in \{0,1,\dots,a_n\}$ similarly as in the proof of the unrestricted case.
There, it was shown that all the triples $w_{i-1},w_i, w_{i+1}$ are oriented counterclockwise, thus all the points $w_i$ with $i  \in\{1,\dots,a_n-1\}$ lie below the line $\overline{v_{n-2}v_n}$.
Thus, if $(\alpha^p,\beta^q)$ lies above the segment connecting $v_{n-2}$ and $v_n$, then there is an $i$ such that $(\alpha^p,\beta^q)$ lies above the segment connecting $w_{i-1}$ and $w_i$. As in the last two paragraphs of the proof of the unrestricted case, the position of $(\alpha^p, \beta^q)$ implies the inequality $p-\log_\alpha(\beta)q < p_{n,i-1} - \log_\alpha(\beta)q_{n,i-1}$, and the contradiction follows from part~\ref{item3} of Lemma~\ref{lem-bestApprox}, as there can be no best upper approximation of $\log_\alpha(\beta)$ which is not a semi-convergent of $\log_\alpha(\beta)$.

\paragraph{Acknowledgment}
This research was initiated at the 11th Eml\'{e}kt\'{a}bla workshop on combinatorics  and geometry.
We would like to thank G\'{e}za T\'{o}th for interesting discussions about the problem during the early stages of the research.

\bibliography{mybibliography}
\bibliographystyle{plain}

\end{document}